\newlength{\defbaselineskip}
\theoremstyle{plain}
\theoremstyle{definition}
\newtheorem{theoremalpha}{Theorem}
\theoremstyle{plain}
\newtheorem{thm}{Theorem}
\newtheorem{lem}[thm]{Lemma}
\newtheorem{prop}[thm]{Proposition}
\theoremstyle{definition}
\newtheorem{defn}[thm]{Definition}
\newtheorem{exmp}[thm]{Example}
\newtheorem{rem}[thm]{Remark}
\numberwithin{equation}{section}
\begin{document}





\title[The principal component of $\text{Hilb}^n(\mathbb{C}^d)$]{The singularities of
the principal component of the Hilbert scheme of points}
\author{Kyungyong Lee}


\address{Department of Mathematics, University of Michigan, Ann Arbor, MI 48109}
\address{Department of Mathematics, Purdue University, West Lafayette, IN 47907}
\email{{\tt kyungl@purdue.edu}}


 \maketitle



\section{Introduction}
Hilbert schemes of points have a rich literature in algebraic geometry, commutative algebra, combinatorics,
representation theory, and approximation theory. Various aspects of them have been studied in many contexts.
In this paper we study the local equations and the singularities of $\text{Hilb}^n(\mathbb{C}^d)$. For a
general introduction to
the field, see \cite[Chapter 18]{MS:comb}.

In \cite{H:hil}, Haiman proved the remarkable result that the isospectral Hilbert scheme
of points in the plane is normal, Cohen-Macaulay and Gorenstein.
He also showed that this implies the $n!$ conjecture and the positivity conjecture for
the Kostka-Macdonald coefficients. In addition, he conjectured that
the isospectral Hilbert scheme over the principal component of
$\text{Hilb}^n(\mathbb{C}^d)$ is Cohen-Macaulay for any $d, n\geq 1$. In particular, his
conjecture implies that the principal component of $\text{Hilb}^n(\mathbb{C}^d)$ is
Cohen-Macaulay (see \cite[Section 5.2]{H:hil} and \cite[Conjecture 18.38]{MS:comb}).

We provide a counterexample to the conjecture. The idea is to look at the local neighborhood
near $\mathfrak{m}^2$ on the principal component of $\text{Hilb}^{9}(\mathbb{C}^8)$, which is an affine cone over a
certain projective variety.
We will see that its local equations contain generators of high degree. Then the geometry of
the projective variety implies that its affine cone is not Cohen-Macaulay. Our main result is the following:

\begin{theoremalpha}\label{mainthmprinhilb}
The principal component of  $\text{Hilb}^{9}(\mathbb{C}^8)$ is not locally Cohen-Macaulay at $\mathfrak{m}^2$.
\end{theoremalpha}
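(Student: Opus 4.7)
\textit{Reduction to an affine cone.} Let $H$ denote the principal component of $\operatorname{Hilb}^9(\mathbb{C}^8)$ and set $R_0 = \mathbb{C}[x_1,\ldots,x_8]$. The scaling action of $\mathbb{C}^*$ on $\mathbb{C}^8$ preserves $\mathfrak{m}^2$ and hence induces an action on $H$ fixing the point $[\mathfrak{m}^2]$. The ambient Zariski tangent space $T_{[\mathfrak{m}^2]}\operatorname{Hilb}^9(\mathbb{C}^8) = \operatorname{Hom}_{R_0}(\mathfrak{m}^2, R_0/\mathfrak{m}^2)$ can be identified with $\operatorname{Sym}^2 V^* \otimes V$, where $V = \mathfrak{m}/\mathfrak{m}^2$: the syzygy $x_k\cdot(x_i x_j) = x_j\cdot(x_i x_k)$ forces the image of any $R_0$-homomorphism into $\mathfrak{m}/\mathfrak{m}^2$, after which the syzygies are vacuous. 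The $\mathbb{C}^*$-action has weight $-1$ on this tangent space, so the dual action on the local ring of $H$ is strictly positively graded. Consequently $\widehat{\mathcal{O}}_{H,[\mathfrak{m}^2]}$ is the completion at the irrelevant ideal of a finitely generated graded $\mathbb{C}$-algebra $R$, and $\operatorname{Spec}(R) = C(Y)$ is an affine cone over a projective variety $Y \subset \mathbb{P}^{287}$ of dimension $\dim H - 1 = 71$. Since Cohen-Macaulayness passes to and from completion, the theorem is equivalent to showing that $Y$ fails to be arithmetically Cohen-Macaulay in this embedding.

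\textit{Extracting the homogeneous ideal.} The next step is to compute, as explicitly as necessary, the homogeneous ideal $I \subset \operatorname{Sym}^{\bullet}(\operatorname{Sym}^2 V^* \otimes V)^*$ cutting out $C(Y)$. I would parametrize a smooth chart on $H$ by the positions of $9$ nearby points collapsing to the origin, expand the corresponding ideal in the $288$ natural tangent coordinates, and read off the relations that hold. By $\mathbb{C}^*$-equivariance these relations are homogeneous, so they generate $I$. The objective is to exhibit a minimal homogeneous generator $f \in I$ whose degree $d$ is too large to be consistent with $Y$ being arithmetically Cohen-Macaulay. The $\operatorname{GL}_8$-equivariance of the whole picture cuts down the search space, since both $I$ and any candidate minimal generator can be sought inside isotypic components of an explicit $\operatorname{GL}_8$-representation.

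\textit{Conclusion via the geometry of $Y$.} Once such an $f$ is in hand, the argument concludes by a general principle: for an arithmetically Cohen-Macaulay subscheme of $\mathbb{P}^N$, the Castelnuovo--Mumford regularity of the defining ideal bounds the maximal degree of any minimal generator, and this regularity is itself controlled by invariants (degree, codimension, dimension) that can be estimated from Step 1. Showing $d$ exceeds the resulting bound yields the contradiction. Equivalently, one can extract from $f$ a nonzero class in some intermediate cohomology $H^i(Y, \mathcal{O}_Y(k))$ with $0 < i < 71$, directly violating arithmetic Cohen-Macaulayness; either route shows that $R$, and hence $\mathcal{O}_{H,[\mathfrak{m}^2]}$, is not Cohen-Macaulay.

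\textit{Main obstacle.} The crux of the proof is the explicit construction of the high-degree generator $f$ in the middle step. This requires both a presentation of the smoothable locus near $[\mathfrak{m}^2]$ concrete enough to write down relations among the $288$ tangent coordinates, and a verification that $f$ is a genuine minimal generator rather than a consequence of lower-degree relations that one might have overlooked.
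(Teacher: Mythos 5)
Your overall architecture matches the paper's: identify a neighborhood of $[\mathfrak{m}^2]$ with an affine cone over a projective variety (the paper does this globally on the affine chart $V_8$, obtaining $\mathbb{C}^8\times \mathrm{Spec}(R_8/J_8)$ with $R_8$ of dimension $280$, rather than only formally via the $\mathbb{C}^*$-weights, but the content is the same), exhibit a minimal homogeneous generator of large degree (the paper produces generators of degree $90$, as $90\times 90$ minors of an explicit $90\times 115$ matrix, living in the Schur module $\mathbb{S}_{(133,130,126,122,119,60,60,60)}W$), and then contradict a regularity bound valid for arithmetically Cohen--Macaulay varieties. The first gap is the one you acknowledge: the degree-$90$ generator and the verification of its minimality are the technical heart of the paper and are not supplied by your outline.

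The second gap is in your concluding step, and it is a gap of mechanism, not merely of detail. For an aCM subscheme the regularity is indeed bounded by $\deg - \mathrm{codim} + 1$, but to use this you would need to bound the degree of a $63$-dimensional variety of codimension $216$ by roughly $305$; nothing in your Step 1 estimates this degree, and there is no reason to expect it to be anywhere near that small. Likewise, a high-degree minimal generator of the ideal of an aCM variety does \emph{not} by itself produce a nonzero intermediate cohomology class $H^i(Y,\mathcal{O}_Y(k))$ with $0<i<\dim Y$ --- an aCM variety has no such classes by definition, and it can nonetheless have minimal generators of arbitrarily high degree provided $H^{\dim Y}(Y,\mathcal{O}_Y(k))=H^0(Y,\omega_Y(-k))$ is nonzero for large $k\geq 0$ (think of high-genus canonical curves). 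So the missing idea is a way to kill $H^0(Y,\omega_Y(-k))$ for $k\ge 0$. The paper does this geometrically: a smooth open subset of $\mathrm{Proj}(R_8/J_8)$ with codimension-$2$ complement is covered by smooth rational proper curves (obtained by moving the $9$ points along pencils and using the $GL(8)$-action), which forces $H^0(\omega)=0$; combined with Cohen--Macaulayness this gives $h^0(\omega_S(-t))=0$ for all $t\geq 0$, hence regularity index $\leq 0$ and $\mathrm{reg}\leq \dim+1=64$, contradicting $\mathrm{reg}\geq 90$. Without this (or an equivalent uniruledness/nonpositivity-of-$\omega$ input), your final step does not close.
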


Vakil showed that a number of important moduli spaces satisfy Murphy's law, and many others studied
badly-behaved moduli spaces of positive-dimensional objects (see \cite{V:bad} and the references therein). However very little is
known about how bad the singularities of the Hilbert scheme of points on a smooth variety of
dimension $>2$ can be. On the other hand, Haiman \cite[Proposition 2.6 and Remark.(2) in p.213]{MH:tq} showed that
a certain blow-up of $\text{Sym}^n(\mathbb{C}^d)$ is the principal component
of $\text{Hilb}^n(\mathbb{C}^d)$, and Ekedahl and Skjelnes \cite{ES:good} generalized it
to the case of quasi-projective schemes. If $d=2$ then the blow-up is a resolution of singularities, but
Theorem~\ref{mainthmprinhilb} implies that if $d,n \gg 0$ then the blow-up destroys the Cohen-Macaulayness
of $\text{Sym}^n(\mathbb{C}^d)$.

Turning to a more detailed description, we consider the Hilbert scheme $\text{Hilb}^{d+1}(\mathbb{C}^d)$ of
$(d+1)$ points in affine $d$-space $\mathbb{C}^d$, because it contains the squares $\mathfrak{m}^2$ of
maximal ideals. It parameterizes
the ideals $I$ of colength $(d+1)$ in
$\mathbb{C}[\mathbf{x}]=\mathbb{C}[x_1, ... ,x_d]$.

Let $V_d\subset \text{Hilb}^{d+1}(\mathbb{C}^d)$ denote the
affine open subscheme consisting of all ideals $I\in
\text{Hilb}^{d+1}(\mathbb{C}^d)$ such that $\{1,x_1, ... ,x_d\}$ is
a $\mathbb{C}$-basis of
 $\mathbb{C}[\mathbf{x}]/I$. We will call $V_d$ \emph{the symmetric affine subscheme}.
 We note that the square of any maximal
 ideal in $\mathbb{C}[\mathbf{x}]$ belongs to the symmetric affine subscheme. One may think of $V_d$ as
a deformation space of $\mathfrak{m}^2$.

 The following proposition is probably well-known to experts \cite[Section 6]{GLS:elem}, \cite{MEHu:elem}.

 \begin{prop}
Let $d\geq 2$. Let $V_d$ be the symmetric affine open subscheme of
$\emph{Hilb}^{d+1}(\mathbb{C}^d)$. Then $V_d$ is isomorphic to
$$\mathbb{C}^d \times \emph{Spec} (R_d/I_d),$$
where $R_d$ is a $d({{d+1}\choose 2}-1)$-dimensional polynomial ring and
$I_d$ is a homogeneous ideal generated by certain quadratic
polynomials. $($When $d=2$, $I_2$ is the zero ideal $(0)$.$)$
\end{prop}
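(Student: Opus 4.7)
The plan is to parameterize $V_d$ by the structure constants of the quotient algebra. Since $\{1, x_1, \ldots, x_d\}$ is a $\mathbb{C}$-basis of $\mathbb{C}[\mathbf{x}]/I$ for every $I \in V_d$, each product $x_i x_j$ with $1 \le i \le j \le d$ has a unique normal form
\[
x_i x_j \;\equiv\; c_{ij}^{0} + \sum_{k=1}^{d} c_{ij}^{k}\, x_k \pmod{I},
\]
and these $\binom{d+1}{2}$ elements generate $I$. Recording the tuple $(c_{ij}^{k})$ identifies $V_d$ with the closed subscheme of the affine space $\mathbb{A}^{(d+1)\binom{d+1}{2}}$ cut out by the requirement that $\mathbb{C}[\mathbf{x}]/I$ be associative, i.e.\ that $x_i(x_j x_k) \equiv (x_i x_j) x_k \pmod{I}$ for all $i, j, k \in \{1, \ldots, d\}$. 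Expanding both sides and matching coefficients of $1, x_1, \ldots, x_d$ turns these into a finite system of quadratic polynomial relations in the $c_{ij}^{k}$.

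The first reduction I would perform is to eliminate the constant terms. Matching the coefficient of $x_i$ in the associativity relation with $i \neq k$ yields
\[
c_{jk}^{0} \;=\; \sum_{l=1}^{d} c_{ij}^{l} c_{lk}^{i} \;-\; \sum_{l=1}^{d} c_{jk}^{l} c_{il}^{i},
\]
which expresses $c_{jk}^{0}$ as a homogeneous quadratic in the linear parameters $\{c_{rs}^{t} : 1 \le r \le s \le d,\ 1 \le t \le d\}$. For every $d \geq 2$ a suitable choice of $i$ makes this applicable to each pair $(j,k)$ with $1 \leq j \leq k \leq d$, so all $\binom{d+1}{2}$ constants are eliminated. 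The second reduction splits off the $\mathbb{C}^d$ factor via translation: the action $x_i \mapsto x_i + a_i$ preserves $V_d$, and on parameters it shifts the trace $\mathrm{tr}(x_i) = \sum_{j=1}^{d} c_{ij}^{j}$ by $(d+1)a_i$. Hence the center-of-mass map $V_d \to \mathbb{C}^d$ sending $I$ to $\tfrac{1}{d+1}(\mathrm{tr}(x_1), \ldots, \mathrm{tr}(x_d))$ is surjective with translation-equivalent fibers; choosing the section $\mathrm{tr}(x_i) = 0$, equivalently $c_{ii}^{i} = -\sum_{j \ne i} c_{ij}^{j}$, produces the desired product decomposition $V_d \cong \mathbb{C}^d \times F$. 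The fiber $F$ lives in the affine space on the remaining $d\binom{d+1}{2} - d = d\left(\binom{d+1}{2} - 1\right)$ linear parameters, and I would take $R_d$ to be the polynomial ring on those coordinates.

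The ideal $I_d$ is then generated by the associativity conditions that were not used to define the $c_{jk}^{0}$, specifically those coming from the coefficient of $x_m$ in $x_i(x_j x_k) - (x_i x_j) x_k$ with $m \notin \{i, k\}$. These have the explicit shape $\sum_l c_{jk}^{l} c_{il}^{m} - \sum_l c_{ij}^{l} c_{lk}^{m} = 0$, which is manifestly homogeneous of degree two in the linear parameters. Note that when $d = 2$ no such index $m$ is available, which recovers $I_2 = (0)$. The main obstacle, and the step where I expect most of the work to lie, will be verifying that this quadratic system really does generate the full set of associativity relations: after back-substituting the formulas for the $c_{ab}^{0}$'s, the constant-term relations $\sum_l c_{jk}^{l} c_{il}^{0} = \sum_l c_{ij}^{l} c_{lk}^{0}$ (and the a priori distinct formulas for the same $c_{jk}^{0}$ coming from different choices of $i$) become ostensibly higher-degree polynomial identities, and one must show they collapse to consequences of the degree-two generators above. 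For $d = 2$ this can be checked by direct expansion, as it must, since $I_2 = (0)$; in general I expect it to follow from the Koszul-type symmetries obtained by cyclically permuting the triple $(i, j, k)$ in the associativity identity.
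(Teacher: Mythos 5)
Your proposal is essentially the paper's own argument in different clothing: the structure constants $c_{ij}^{k}$ are the paper's ideal-projector coefficients $p_{k,ij}$, the associativity conditions are de Boor's identity $P(gP(h))=P(hP(g))$, and the elimination of the constants $c_{jk}^{0}$ via the coefficient of $x_i$ (with $i\neq k$) together with the splitting off of the translation $\mathbb{C}^d$ are exactly the paper's Lemma on $C(i+1;j,(i,i+1))$ and its change of variables $p'_{r,sr}\mapsto q_{r,sr}+q_{s,ss}$. The one step you defer --- that the constant-coefficient associativity relations, which become cubic after back-substitution, are generated by the quadratic ones --- is precisely the content of the paper's explicit computation expressing $C(0;j,(i,k))$ as an $R$-linear combination of the $C(a;b,(e,f))$ with $a\geq 1$, and it does go through (though by a direct identity rather than by the cyclic symmetries you anticipate).
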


More precisely, since $V_d$ admits a natural action of $GL(d)$, we can describe the quotient ring in terms of Schur
functors.

\begin{thm}\label{mainthm1}
Let $d\geq 3$. Then $V_d$ is isomorphic to
$$\mathbb{C}^d \times\emph{Spec}
\frac{\emph{Sym}^{\bullet}(\mathbb{S}_{(3,1,1,\cdots,1,0)}W)}{<\mathbb{S}_{(4,3,2,\cdots,2,1)}W>},$$where
$W$ is a $d$-dimensional $\mathbb{C}$-vector space,
$(3,1,1,\cdots,1,0)$ is a partition of $(d+1)$ and
$(4,3,2,\cdots,2,1)$ is of $(2d+2)$.
\end{thm}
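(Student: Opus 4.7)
The plan is to refine the previous Proposition by tracking $GL(W)$-equivariance at every step. First I would coordinatize $V_d$ by structure constants: every ideal $I\in V_d$ is determined by its multiplication table $x_ix_j \equiv c_{ij}^0 + \sum_{k=1}^d c_{ij}^k x_k \pmod I$, and the arrays $(c_{ij}^0)$ and $(c_{ij}^k)$ assemble into $GL(W)$-equivariant coordinate functions on the ambient affine space, living in the dual of $\tn{Sym}^2 W$ and of $\tn{Sym}^2 W \otimes W^*$ respectively.

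Next I would split off the $\mathbb{C}^d$ factor by the $GL(W)$-equivariant center-of-mass morphism $\tau\colon V_d\to W$ defined by $I\mapsto \tfrac{1}{d+1}(\tn{tr}\,m_{x_1},\ldots,\tn{tr}\,m_{x_d})$, which admits the canonical section $t\mapsto \mathfrak{m}_t^2$. The fiber $V_d^{\tn{cent}}:=\tau^{-1}(0)$ is carved out by the trace conditions $\sum_j c_{ij}^j=0$ together with the associativity relations. Summing the constant-target associativity equation over an appropriate index and invoking $\sum_j c_{ij}^j=0$ yields, for $d\geq 3$, an explicit formula expressing each $c_{ij}^0$ as a quadratic polynomial in the $c_{ij}^k$ with $k\geq 1$. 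This eliminates the entire $\tn{Sym}^2 W$ block of constants and realizes $V_d^{\tn{cent}}$ as a closed subscheme of the affine space whose coordinate functions form the traceless part of $\tn{Sym}^2 W\otimes W^*$. By Pieri's rule applied to $\tn{Sym}^2 W\otimes \wedge^{d-1}W$, this traceless part is $GL$-equivariantly the irreducible $\mathbb{S}_{(3,1,\ldots,1,0)}W$ (up to a $\det$-twist absorbed into the paper's convention), so $R_d = \tn{Sym}^{\bullet} \mathbb{S}_{(3,1,\ldots,1,0)} W$.

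Finally I would identify the defining ideal $I_d\subset R_d$. By the previous Proposition, $I_d$ is generated in degree two, so by $GL(W)$-equivariance $I_d\cap (R_d)_2$ is a $GL(W)$-submodule of $\tn{Sym}^2 \mathbb{S}_{(3,1,\ldots,1,0)}W$. Commutativity of the product already forces the totally symmetric associator to vanish, hence the map $(a,b,c)\mapsto (ab)c-a(bc)$ factors through the quotient $W^{\otimes 3}/\tn{Sym}^3 W \cong 2\,\mathbb{S}_{(2,1)}W\oplus \wedge^3 W$; together with the target factor $\mathbb{C}\oplus W^*$ coming from $\mathbb{C}[x]/I$ and with the constant-target relations already used to eliminate the $c_{ij}^0$, this constrains the list of irreducibles that can appear in $I_d\cap (R_d)_2$. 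I would then exhibit an explicit highest-weight vector of weight $(4,3,2,\ldots,2,1)$ built out of the associator to show $\mathbb{S}_{(4,3,2,\ldots,2,1)}W\subseteq I_d$, and match a dimension count against the plethystic decomposition of $\tn{Sym}^2 \mathbb{S}_{(3,1,\ldots,1,0)}W$ to conclude equality.

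The main obstacle is this last step: pinning down the $GL(W)$-type of the quadratic relations as exactly the irreducible $\mathbb{S}_{(4,3,2,\ldots,2,1)}W$, and in particular ruling out any additional summand. Both the plethysm decomposition of $\tn{Sym}^2 \mathbb{S}_{(3,1,\ldots,1,0)}W$ and the combinatorial matching of the associator's $GL(W)$-type against the highest weight $(4,3,2,\ldots,2,1)$ are nontrivial tasks that drive the bulk of the proof.
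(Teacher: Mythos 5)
Your outline follows the same broad strategy as the paper---structure constants, $GL(W)$-equivariance, elimination of the constant terms $c_{ij}^0$, splitting off $\mathbb{C}^d$, and identification of the quadric module---but the two places where you are vaguest are exactly where the paper's execution differs, so a comparison is worth recording. For the elimination of $c_{ij}^0$: no summation over an index and no trace condition are needed. The coefficient of $x_k$ in the associativity relation $P(x_kP(x_ix_j))-P(x_iP(x_kx_j))$, for any single $k\neq i,j$ (this is where $d\geq 3$ enters, or rather $d\ge 2$ suffices here), equals $c_{ij}^0$ plus an explicit quadratic in the $c^{\ge 1}$'s, so each $c_{ij}^0$ is eliminated by one equation; the paper does this first and only afterwards splits off $\mathbb{C}^d$ by a linear change of variables $q_{r,sr}=p_{r,sr}-\tfrac12 p_{s,ss}$, checking that no generator involves $q_{s,ss}$ (your translation-equivariance argument for the product decomposition is a clean alternative). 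For the step you flag as the main obstacle: the paper does not argue by a highest-weight vector plus a dimension count against the plethysm of $\tn{Sym}^2\mathbb{S}_{(3,1,\ldots,1,0)}W$. Instead it embeds the span $Y$ of the associativity relations $C(a;j,(i,k))$ into the much smaller module $\bigwedge^{d-1}W\otimes W\otimes\bigwedge^2W$, determines the \emph{complete} list of linear relations among the $C$'s (a cyclic Jacobi-type relation and a trace relation, each verified by a one-line cancellation of monomials), and reads off $Y\cong\mathbb{S}_{(3,2,1,\ldots,1,0)}W\oplus\mathbb{S}_{(3,1,\ldots,1,1)}W$; the second summand is the part linear in the $c_{ij}^0$ and is consumed by the elimination, and the first, twisted by $\det$, is $\mathbb{S}_{(4,3,2,\ldots,2,1)}W$. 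The plethysm decomposition is used only to check that this module actually sits inside $\tn{Sym}^2\mathbb{S}_{(3,1,\ldots,1,0)}W$. This device supplies precisely the count of independent quadratic relations that your plan defers to an unspecified ``dimension count,'' and it avoids having to rule out the other five summands of the plethysm one by one; if you pursue your version, be aware that computing $\dim(I_d\cap (R_d)_2)$ amounts to the same determination of all linear relations among the associativity equations, so you should expect to reproduce that computation in some form.
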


If $d\leq 6$ then $V_d$ is irreducible \cite{EI:zero}, \cite{S:def}, \cite{CEVV}.
However if $d\geq 7$ then $V_d$ is reducible, and there is a
distinguished component called a principal component. For any $d$, let $P_d$ denote the principal component
of $V_d$. Here we regard
it as its reduced structure.\footnote{It is not known whether $\text{Hilb}^n(\mathbb{C}^d)$ is reduced or
not, for $d\geq 3$.} The general elements in $P_d$ are
radical ideals defining $(d+1)$ distinct points whose linear span is
non-degenerate, i.e. there is no hyperplane passing through them in
$\mathbb{C}^d$. The most special elements in $P_d$ are $\mathfrak{m}^2$. It is clear that the dimension of the principal
component is $d(d+1)$.

Let $J_d$ denote the defining ideal of $P_d$, in other words,
$$
P_d \text{ }\cong  \text{ }\mathbb{C}^d \times \text{Spec} (R_d/J_d),
$$
where $J_d$ is a reduced homogeneous ideal. 

There has been some interest in trying to find the equations $P_d$ satisfy
 (e.g. \cite[Problem 18.40]{MS:comb}, \cite[Remark 3.4]{S:limit}). But up to now
 they have not been known to satisfy any other equations, besides the quadratic Pl\"{u}cker relations.
  We present some new equations and obtain the following result.

 \begin{thm}\label{eightp}
Let $d=8$ and let $P_8$ be the principal component of $V_8$. Then
$P_8$ is isomorphic to
$$\mathbb{C}^8 \times \emph{Spec} (R_8/J_8),$$
where $R_8$ is a $8({{9}\choose 2}-1)$-dimensional polynomial ring and a
set of the minimal homogeneous generators of $J_8$ contains certain
polynomials of degree $90$. In particular, the Castelnuovo-Mumford
regularity of $J_8$ is $\geq 90$, while the dimension of $\emph{Proj}
(R_8/J_8)$ is $63$.
\end{thm}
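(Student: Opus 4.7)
The plan is to exploit the $GL(8)$-equivariance provided by Theorem~\ref{mainthm1}: both $V_8$ and $P_8$ are $GL(8)$-stable, so $J_8/I_8$ decomposes into $GL(8)$-isotypic components, and it suffices to locate a Schur summand of new relations appearing for the first time in degree $90$. The first ingredient is an explicit parametrization of a dense open subset of $P_8$: given $9$ points $p_1,\dots,p_9\in\mathbb{C}^8$ in linearly general position, put the radical ideal of $\{p_1,\dots,p_9\}$ into the normal form determined by the basis $\{1,x_1,\dots,x_8\}$ of the quotient and read off its coordinates in $R_8$. This produces a dominant rational map from an open subset of $(\mathbb{C}^8)^9$ onto $P_8$, so pulling functions back under it is a faithful test for membership in $J_8$.

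Using this parametrization I then identify explicit candidate degree-$90$ generators. Within the graded piece
\[
\bigl(\mathrm{Sym}^\bullet\,\mathbb{S}_{(3,1^6,0)}W\big/\langle\mathbb{S}_{(4,3,2^5,1)}W\rangle\bigr)_{90},
\]
I locate a specific Schur summand $\mathbb{S}_\lambda W$ whose highest-weight vector vanishes identically under the parametrization above. By irreducibility of $\mathbb{S}_\lambda W$ and $GL(8)$-equivariance of the map, vanishing of one nonzero vector propagates to vanishing of the whole isotypic component; thus the vanishing check reduces to a single computer-algebra evaluation.

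The crucial technical step is establishing minimality: that $(J_8)_k = (I_8)_k$ for $3 \le k \le 89$, so the chosen degree-$90$ elements cannot arise from products of lower-degree relations. I achieve this by matching $GL(8)$-characters of $R_8/I_8$ and $R_8/J_8$ degree by degree up to $89$. The character of $R_8/I_8$ is accessible via Schur-module plethysm starting from Theorem~\ref{mainthm1}, while the character of $R_8/J_8$ (the image of the parametrization) is computable from the natural $GL(8)$-action on the coordinate ring of $(\mathbb{C}^8)^9$. Equality of these characters in the required range, combined with the surjection $R_8/I_8 \twoheadrightarrow R_8/J_8$, forces $(J_8/I_8)_k = 0$ for $k \le 89$ and therefore the minimality of the degree-$90$ generators found.

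The remaining conclusions are immediate. Any graded ideal has Castelnuovo--Mumford regularity at least the largest degree of a minimal generator, so $\mathrm{reg}(J_8) \ge 90$. For the dimension, $\dim P_8 = d(d+1) = 72$ with $d=8$, so $\dim \mathrm{Proj}(R_8/J_8) = 72 - 8 - 1 = 63$. The main obstacle throughout is the minimality step: the range of degrees to rule out is large, and while $GL(8)$-equivariance compresses the verification to comparing sequences of Schur multiplicities rather than monomial-by-monomial dimensions, the required plethysms of large symmetric powers of $\mathbb{S}_{(3,1^6,0)}W$ remain computationally intensive and are where the heavy lifting lies.
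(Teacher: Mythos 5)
Your overall framework---$GL(8)$-equivariance so that new relations come in whole Schur summands, plus the explicit rational parametrization of $P_8$ by $9$ general points as the membership test for $J_8$---is exactly the paper's framework (Lemma~\ref{directsumschur} and the footnote in Section~\ref{proofmain2}), and your regularity and dimension computations at the end are fine. But there are two genuine gaps. First, "locate a specific Schur summand $\mathbb{S}_\lambda W$ in degree $90$ whose highest-weight vector vanishes under the parametrization" is not a construction: the graded piece $\mathrm{Sym}^{90}(\mathbb{S}_{(3,1,\dots,1,0)}W)$ is astronomically large and cannot be searched. The paper's essential input here is the observation that a general ideal with Hilbert function $(1,5,3)$ does not lie in $P_8$, which singles out the coordinates $\{p_{r,st}\}_{1\le r\le 3,\,4\le s,t\le 8}$; the $90$ quadrics $C(a;j,(i,k))$ supported on these indices admit a matrix factorization through a $90\times 115$ matrix $\mathbf{M}$ linear in those coordinates, and the degree-$90$ relations are the $90\times 90$ minors of $\mathbf{M}$, which land in $\mathbb{S}_{(133,130,126,122,119,60,60,60)}W$. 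Without some such mechanism your candidate elements do not exist on paper.

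Second, your minimality step is both unjustified and unnecessarily strong. You propose to prove $(J_8)_k=(I_8)_k$ for all $3\le k\le 89$ by matching $GL(8)$-characters degree by degree. Nothing in the problem forces this equality---the theorem only asserts that \emph{some} minimal generators have degree $90$, and $J_8$ may well acquire other new generators in intermediate degrees, in which case your characters would fail to match and the strategy collapses. Moreover the computation you describe (plethysms $\mathrm{Sym}^{k}$ of the $280$-dimensional module $\mathbb{S}_{(3,1,\dots,1,0)}W$ up to $k=89$, decomposed into $GL(8)$-irreducibles) is not feasible. What minimality actually requires is only that $\mathbb{S}_{(133,\dots)}W\not\subset (R_8)_1\cdot (J_8)_{89}$. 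By equivariance this reduces to the finitely many partitions $\lambda$ with $\mathbb{S}_\lambda W\subset \mathrm{Sym}^{89}(\mathbb{S}_{(3,1,\dots,1,0)}W)$ and $\mathbb{S}_{(133,\dots)}W\subset \mathbb{S}_\lambda W\otimes\mathbb{S}_{(3,1,\dots,1,0)}W$; the paper shows there are only $15$ such $\lambda$ and checks that none of their embeddings lie in $(J_8)_{89}$ (and separately, via Littlewood--Richardson, that $\mathbb{S}_{(133,\dots)}W$ is not in the ideal generated by the quadrics $I_8$). You should replace your character-matching step with this localized check.
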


Again more precisely,
\begin{prop}\label{mainthm2}
The principal component $P_8$ is isomorphic to
$$\mathbb{C}^8 \times\emph{Spec}
\frac{\emph{Sym}^{\bullet}(\mathbb{S}_{(3,1,1,\cdots,1,0)}W)}{J_8},$$where
the vector space of the minimal homogeneous generators of $J_8$ contains
$$\mathbb{S}_{(133,130,126,122,119,60,60,60)}W.$$
\end{prop}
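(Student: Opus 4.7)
The plan is to exploit the $GL(8)$-equivariance of the entire construction. Since $V_8$ carries a natural $GL(8)$-action and the principal component $P_8 \subset V_8$ is a $GL(8)$-invariant irreducible reduced subvariety (it is the closure of the $GL(8)$-orbit through a non-degenerate $9$-point configuration), its defining ideal $J_8 \subset R_8 = \text{Sym}^\bullet(\mathbb{S}_{(3,1,1,1,1,1,1,0)}W)$ is a $GL(8)$-submodule and hence decomposes as a direct sum of Schur isotypic components. To establish the claim, it suffices to (i) exhibit a highest-weight vector $f \in J_8$ of weight $\lambda = (133,130,126,122,119,60,60,60)$, and (ii) verify that $\mathbb{S}_\lambda W$ survives in the minimal-generator space $J_8/\mathfrak{m} J_8$. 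Since $|\lambda| = 810 = 90 \cdot 9$ and the generators of $R_8$ have weight-sum $9$, such an $f$ must sit in the degree-$90$ piece of $R_8$, consistent with the regularity bound in Theorem~\ref{eightp}.

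For step (i), I would parametrize $P_8$ birationally. A general element of $P_8$ is the radical ideal $I(p_1,\ldots,p_9)$ of nine points $p_1,\ldots,p_9 \in \mathbb{C}^8$ in non-degenerate position, and normalizing $\mathbb{C}[\mathbf{x}]/I(p_1,\ldots,p_9)$ against the basis $\{1,x_1,\ldots,x_8\}$ produces a rational map $\phi:(\mathbb{C}^8)^9 \dashrightarrow V_8$ whose image closure is $P_8$. A polynomial $f \in R_8$ lies in $J_8$ if and only if $\phi^* f \equiv 0$. To produce $f$, I would write down a highest-weight vector of weight $\lambda$ in $\text{Sym}^{90}(\mathbb{S}_{(3,1,1,1,1,1,1,0)}W)$---for instance, as a polynomial in explicit Young-tableau generators---compute its pullback via $\phi^*$, and check that it vanishes identically. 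Both $\phi$ and $f$ carry compatible $GL(8) \times S_9$-equivariance, so the check can be reduced to a single torus-weight orbit, making the computation much smaller than it looks a priori.

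For step (ii), minimality requires that the multiplicity of $\mathbb{S}_\lambda W$ in the degree-$90$ part of $J_8$ strictly exceed its multiplicity in the degree-$90$ part of $\mathfrak{m} J_8$. I would first compute, degree by degree up through $89$, the Schur decomposition of $J_8$ by applying the parametrization $\phi$ to each Schur isotypic summand of $R_8$; then apply the Littlewood--Richardson rule to multiply each lower-degree Schur summand of $J_8$ by $\mathbb{S}_{(3,1,1,1,1,1,1,0)}W$ and accumulate the resulting Schur content in degree $90$. If $\mathbb{S}_\lambda W$ appears in $J_8$ with strictly larger multiplicity than in this accumulated $\mathfrak{m} J_8$, then $\mathbb{S}_\lambda W$ must contribute a genuine minimal generator.

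The main obstacle is computational: the naive problem---degree $90$ inside a polynomial ring on $280$ variables---is astronomically large, and the whole plan hinges on $GL(8)$-equivariance collapsing it into a sequence of small isotypic-component computations. Even then, lower-degree parts of $J_8$ may carry many Schur summands, and the Littlewood--Richardson bookkeeping in degree $90$ is delicate; the sharpest part of the argument will likely be identifying structural constraints on which partitions $\mu$ with $|\mu| = 801$ can multiply against $\mathbb{S}_{(3,1,1,1,1,1,1,0)}W$ to produce $\mathbb{S}_\lambda W$, or finding a slicker representation-theoretic obstruction that bypasses a full enumeration of the low-degree content of $J_8$.
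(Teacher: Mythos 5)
Your framework is right as far as it goes---the $GL(8)$-equivariance argument (which is the paper's Lemma~\ref{directsumschur}) and the use of the explicit rational parametrization of $P_8$ to test membership in $J_8$ both appear in the paper---but step (i) has a genuine gap: you never produce a candidate element of $J_8$, and ``write down a highest-weight vector of weight $\lambda$ and check that $\phi^*f\equiv 0$'' is not executable as stated. The irreducible $\mathbb{S}_{(133,130,126,122,119,60,60,60)}W$ occurs in $\text{Sym}^{90}(\mathbb{S}_{(3,1,\dots,1,0)}W)$ (a degree-$90$ piece of a polynomial ring in $280$ variables) with enormous multiplicity, so ``a'' highest-weight vector of that weight is far from unique; a randomly chosen copy will not lie in $J_8$, and your proposal offers neither a reason to believe some copy does nor a mechanism for locating it inside the multiplicity space. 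The paper's central idea, which is exactly the piece you are missing, is a determinantal construction: the $90$ quadrics $C(a;j,(i,k))$ with $1\le a\le 3$ and $4\le i,j,k\le 8$ have the property that every term is a product of one coordinate in $\{p_{r,st}\}_{1\le r\le 3,\,4\le s,t\le 8}$ and one coordinate outside that set, so they factor as $\mathbf{M}\cdot v$ with $\mathbf{M}$ a $90\times 115$ matrix whose entries are $0,\pm p_{r,st}$ ($1\le r\le 3$, $4\le s,t\le 8$) and $v$ a vector of (suitably shifted) other coordinates. The degree-$90$ maximal minors of $\mathbf{M}$, which involve only those $45$ coordinates, are the new equations; their existence is predicted geometrically by the fact that a general ideal with Hilbert function $(1,5,3)$ does not lie in $P_8$, so $P_8$ must satisfy equations in those coordinates alone. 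Only after this reduction does the verification via the parametrization (and the identification of the span of the minors with $\mathbb{S}_{(133,\dots)}W$) become a finite computation.

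For step (ii), your primary plan---computing the Schur decomposition of $J_8$ in every degree up to $89$---is infeasible, but the refinement you gesture at in your final sentence is precisely what the paper does: by the Littlewood--Richardson rule there are only $15$ partitions $\mu$ of $801$ with $\mathbb{S}_\mu W\subset(\mathbb{S}_{(3,1,\dots,1,0)}W)^{\otimes 89}$ and $\mathbb{S}_{(133,\dots)}W\subset\mathbb{S}_\mu W\otimes\mathbb{S}_{(3,1,\dots,1,0)}W$, and one checks directly that none of their embeddings into $\text{Sym}^{89}(\mathbb{S}_{(3,1,\dots,1,0)}W)$ lie in $J_8$. So the minimality step is salvageable along your lines, but the existence step needs the matrix factorization and its geometric source; without them the argument does not get off the ground.
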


\begin{proof}[Proof of Theorem~\ref{mainthmprinhilb}]
Together with the following lemma and proposition, Theorem~\ref{eightp} implies that the principal
component $P_8$ is not Cohen-Macaulay. More concretely, if $P_8$ were Cohen-Macaulay,
then $\text{Proj} (R_8/J_8)$ would be arithmetically
Cohen-Macaulay, but then Lemma~\ref{rationalacm} and Proposition~\ref{rationx} would
imply $\text{reg}(\text{Proj} (R_8/J_8))\leq 64$,
which would contradict Theorem~\ref{eightp}.
\end{proof}

\begin{lem}\label{rationalacm}
Let $S \subset \mathbb{P}^N$ be a projective arithmetically
Cohen-Macaulay variety of dimension $n$. Suppose that there is a
smooth open set $\tilde{U} \subset S$ such that\\
\noindent $\bullet$ $\emph{codim}_S(S\setminus \tilde{U}) \geq 2$, and\\
\noindent $\bullet$ $\tilde{U}$ is
covered by rational proper curves, i.e., for any point $x\in
\tilde{U}$, there is a smooth irreducible rational proper curve
on $\tilde{U}$ passing through $x$.

Then $\emph{reg}(S)\leq n+1$.
\end{lem}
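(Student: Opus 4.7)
The plan is to reduce the regularity bound to the single vanishing $H^0(S,\omega_S)=0$, and then to establish that vanishing by restricting sections to free rational curves in $\tilde{U}$.

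First I would reformulate $\tn{reg}(S)\leq n+1$ cohomologically. Since $S\subset\mathbb{P}^N$ is arithmetically Cohen--Macaulay of dimension $n$, the intermediate cohomologies $H^i(\mathbb{P}^N,\II_S(j))$ vanish for all $1\leq i\leq n$ and every integer $j$. Unwinding the Castelnuovo--Mumford definition therefore collapses $\tn{reg}(\II_S)\leq n+1$ to the single condition $H^{n+1}(\mathbb{P}^N,\II_S)=0$, which via the ideal sequence is isomorphic to $H^n(S,\OO_S)$. Serre duality on the Cohen--Macaulay variety $S$ identifies this group with $H^0(S,\omega_S)^{\vee}$, so it suffices to show $H^0(S,\omega_S)=0$.

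Next I would push the problem to the smooth locus. Because $S$ is Cohen--Macaulay, the dualizing sheaf $\omega_S$ is a maximal Cohen--Macaulay module and in particular satisfies Serre's condition $S_2$. Combined with the hypothesis $\codim_S(S\setminus\tilde{U})\geq 2$, this yields the identification $\omega_S=j_{\ast}\omega_{\tilde{U}}$, where $j\colon\tilde{U}\hookrightarrow S$ is the open inclusion and $\omega_{\tilde{U}}$ is the canonical bundle on the smooth variety $\tilde{U}$. Consequently $H^0(S,\omega_S)\cong H^0(\tilde{U},\omega_{\tilde{U}})$, and one is reduced to showing that $\omega_{\tilde{U}}$ admits no nonzero global section.

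For the last step I would invoke the rational curves. The pointwise covering hypothesis makes $\tilde{U}$ uniruled, so a standard deformation-theoretic argument in characteristic zero (cf.\ Koll\'ar, \emph{Rational Curves on Algebraic Varieties}, Ch.~II--IV) produces, through a general point of $\tilde{U}$, a smooth proper rational curve $C\cong\mathbb{P}^1$ which is \emph{free} in the sense that its normal bundle $N_{C/\tilde{U}}$ is globally generated. Restriction gives $\omega_{\tilde{U}}|_C=\omega_C\otimes(\det N_{C/\tilde{U}})^{\vee}$, a line bundle of degree
\[
-2-\deg\det N_{C/\tilde{U}}\;\leq\;-2\;<\;0,
\]
since a globally generated bundle on $\mathbb{P}^1$ has non-negative degree. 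Hence $H^0(C,\omega_{\tilde{U}}|_C)=0$, and any $\sigma\in H^0(\tilde{U},\omega_{\tilde{U}})$ vanishes on every such $C$. As free rational curves cover a Zariski-dense open subset of $\tilde{U}$, $\sigma$ is forced to be identically zero.

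The principal technical obstacle is the passage from the pointwise covering assumption to the existence of a \emph{free} rational curve through a general point of $\tilde{U}$; once that deformation-theoretic input is invoked, both the degree calculation on $\mathbb{P}^1$ and the $S_2$-extension from $\tilde{U}$ to $S$ are essentially formal.
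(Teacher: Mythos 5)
Your proposal is correct and follows essentially the same route as the paper: both arguments reduce $\tn{reg}(S)\leq n+1$ to the single vanishing $H^0(S,\omega_S)=0$ (you via the cohomological characterization of regularity for aCM subschemes, the paper via the equivalent comparison of Hilbert function and Hilbert polynomial using $r(S)=\tn{reg}(S)-n-1$), then pass to $H^0(\tilde U,\omega_{\tilde U})$ using the $S_2$ property of $\omega_S$ and $\codim_S(S\setminus\tilde U)\geq 2$, and finally kill sections of $\omega_{\tilde U}$ using the covering family of rational curves. The only real difference is that you spell out the last step (free curves, negative degree of $\omega_{\tilde U}|_C$) where the paper simply cites Koll\'ar; your elaboration is accurate.
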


\begin{prop}\label{rationx}
Let $X=\emph{Proj} (R_d/J_d)$ for $2\leq d\leq 8$. Then there is a
smooth open set $\tilde{U}_d \subset X$ such that\\
\noindent $\bullet$ $\emph{codim}_X (X\setminus \tilde{U}_d) = 2$, and\\
\noindent $\bullet$ $\tilde{U}_d$ is
covered by rational proper curves.
\end{prop}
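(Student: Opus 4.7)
The plan is to take $\tilde{U}_d \subset X$ to be the image, under the projection from the affine cone $P_d$, of the open locus in $P_d$ parametrizing ideals whose support in $\mathbb{C}^d$ is either (a) reduced at $d+1$ distinct non-degenerate points, or (b) supported at $d$ distinct non-degenerate points with one length-$2$ curvilinear thickening (i.e., one double point plus $d-1$ simple points). Openness of this locus is clear. Smoothness of $P_d$ at type (a) points is classical (Hilbert schemes of points on a smooth variety are smooth at reduced subschemes), and smoothness at type (b) points follows from a direct computation that the tangent space has the expected dimension $d(d+1) = \dim P_d$, since curvilinear length-$(d+1)$ ideals in $\mathbb{C}^d$ are unobstructed. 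The complement of (a)$\cup$(b) in $P_d$ consists of ideals whose support has two or more non-simple components, or a single component of length $\geq 3$; a dimension count via the Hilbert--Chow morphism $P_d \to \mathrm{Sym}^{d+1}(\mathbb{C}^d)$ (exploiting the Haiman/Ekedahl--Skjelnes blow-up description) shows that each such stratum has codimension at least $2$ in $P_d$, and this bound descends to $X$ under projectivization.

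For rational proper curves through type (b) points, I would use the Hilbert--Chow fibers over the one-double-point stratum of $\mathrm{Sym}^{d+1}(\mathbb{C}^d)$: each such fiber is a $\mathbb{P}^{d-1}$ parametrizing the direction of the curvilinear thickening, and any line in this $\mathbb{P}^{d-1}$ through the given type (b) ideal descends to a $\mathbb{P}^1 \subset X$ lying entirely in the type (b) stratum, hence in $\tilde{U}_d$. For rational curves through a type (a) point $[I]$ represented by a configuration $\{p_0, \ldots, p_d\}$, I would move the point $p_0$ along a line $L$ through some other point $p_i$: at the instant $p_0$ reaches $p_i$ the configuration enters type (b), and the resulting $1$-parameter family of ideals in $V_d$ yields a morphism $\mathbb{A}^1 \to X$ that extends by the properness of $X$ to $f : \mathbb{P}^1 \to X$, a rational proper curve through $[I]$.

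The main obstacle is ensuring that the image of $f$ remains inside $\tilde{U}_d$. The naive limit as $p_0 \to \infty$ along $L$, after rescaling, collapses $d$ of the points to a common location, producing a scheme of length $d$ at a single point plus one simple point---which lies in the codimension-$\geq 2$ complement of $\tilde{U}_d$. Overcoming this requires either refining the $1$-parameter family (coordinating the motion of several points so that both $t=0$ and $t=\infty$ limits land in types (a) or (b)), or invoking the $PGL(d)$-action on $X$: a suitably chosen $1$-parameter subgroup $\mathbb{C}^* \subset PGL(d)$ should have orbit closure through $[I]$ equal to a rational curve contained in $\tilde{U}_d$, with the two fixed points on the curve corresponding to type (a) or (b) configurations. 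Verifying either variant reduces to an explicit analysis of the action on the normal-form coordinates from Theorem~\ref{mainthm1}, which is the main computational step; the remaining verifications (openness, tangent space dimension, Hilbert--Chow stratum codimensions, smoothness of $f(\mathbb{P}^1)$) are routine.
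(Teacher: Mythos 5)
Your overall framework --- the same open locus $U_d$ of ideals whose radical cuts out at least $d$ distinct points, explicit rational curves obtained by degenerating point configurations to a single curvilinear double point, and a $GL(d)$-action to spread one curve over all of $\tilde{U}_d$ --- is the paper's framework, and your smoothness argument for the two strata is sound. The genuine gap is in the construction of the curves themselves, in both cases. For type (b): a line in the $\mathbb{P}^{d-1}$ of tangent directions at the doubled point $q_1$ does \emph{not} descend to a $\mathbb{P}^1$ contained in $\tilde{U}_d$. The cone $\text{Spec}(R_d/J_d)$ lives inside the chart $V_d$ where $\{1,x_1,\dots,x_d\}$ is a basis of $\mathbb{C}[\mathbf{x}]/I$, and for a double point at $q_1$ in direction $v$ plus simple points $q_2,\dots,q_d$ this fails exactly when $v\in\text{span}(q_2-q_1,\dots,q_d-q_1)$, a hyperplane of $\mathbb{P}^{d-1}$ which every line must meet. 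Approaching that hyperplane the coordinates $p_{r,st}$ blow up and the limit in $\text{Proj}(R_d/J_d)$ escapes $\tilde{U}_d$: already for $d=2$, with $q_1=(1,0)$, $q_2=(-2,0)$, $v=(t,1)$, one finds $P(x_1^2)=2-x_1+3t\,x_2$, $P(x_1x_2)=x_2$, $P(x_2^2)=0$, so as $t\to\infty$ the point of $X$ tends to the class of the ideal $(x_1^2-cx_2,\,x_1x_2,\,x_2^2)$, which is supported at a single point and lies in the codimension-two complement. Such a curve cannot be fed into Lemma~\ref{rationalacm}, which needs proper curves \emph{contained in} $\tilde{U}_d$. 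For type (a) you explicitly leave the construction open, and the one-parameter-subgroup variant fails for exactly the reason you identify: one of its two limit points collapses $d$ of the $d+1$ points.

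The missing ingredient is a coordinated degeneration in which no limit ever collapses more than two points. The paper takes the fixed simplex $x_{i,j}=-d$ if $i=j$ and $1$ otherwise (centered at the origin), fixes $d$ distinct points $[a_j:b_j]\in\mathbb{P}^1\setminus\{[1:1]\}$, and sends $[\alpha:\beta]$ to the configuration obtained by rescaling the $j$-th coordinate of every vertex by the linear form $(b_j\alpha-a_j\beta)/(b_j-a_j)$. Since the $d$ scaling factors are distinct linear forms, at most one vanishes at a time and none is ever ``at infinity''; when the $j$-th one vanishes exactly two vertices collide, along the direction $e_j$, which is transverse to the hyperplane spanned by the surviving points, so the limit stays in $V_d$ and is of type (b). This single $\mathbb{P}^1$ lies entirely in $\tilde{U}_d$, contains points of both types, and its $GL(d)$-translates cover $\tilde{U}_d$. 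Separately, be aware that the paper establishes smoothness of $U_d$ and $\text{codim}_{P_d}(P_d\setminus U_d)=2$ by machine computation for $2\le d\le 8$; your proposed Hilbert--Chow dimension count is a reasonable substitute but would still require bounding the fibers of $\rho$ over non-reduced cycles, which you have not carried out.
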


I am grateful to Professors Rob Lazarsfeld, David Eisenbud, William Fulton, Mark Haiman,
Anthony Iarrobino, Steve Kleiman, Ezra Miller, Mircea Musta\c{t}\u{a}, Bjorn Poonen, Mihnea Popa,
Boris Shekhtman, Roy Skjelnes,
Bernd Sturmfels, Ravi Vakil, and Dustin Cartwright for their valuable advices, suggestions, comments,
discussions and correspondence.



\section{Local equations of the Hilbert scheme of points}\label{proofmain1}

In this section we prove Theorem ~\ref{mainthm1}. In fact the defining ideal of $V_d$ will be obtained
by very concrete computations.

Before we begin the proof, let us explain the notation more precisely. By Lemma \ref{comput},
there is an injective homomorphism
 $$j:\mathbb{S}_{(4,3,2,\cdots,2,1)}W \hookrightarrow \text{Sym}^{2}(\mathbb{S}_{(3,1,1,\cdots,1,0)}W)$$
of Schur modules. Then $j$ induces natural maps
$$\aligned &\mathbb{S}_{(4,3,2,\cdots,2,1)}W \otimes \text{Sym}^{r-2}(\mathbb{S}_{(3,1,1,\cdots,1,0)}W)\\
&\hookrightarrow
\text{Sym}^{2}(\mathbb{S}_{(3,1,1,\cdots,1,0)}W)\otimes
\text{Sym}^{r-2}(\mathbb{S}_{(3,1,1,\cdots,1,0)}W)\\
&\rightarrow \text{Sym}^{r}(\mathbb{S}_{(3,1,1,\cdots,1,0)}W),\text{
}\text{ }\text{ }\text{ }\text{ }\text{ }\text{ }\text{ }\text{
}\text{ }\text{ }\text{ }\text{ }\text{ }\text{ }\text{ }\text{
}\text{ }\text{ }\text{ }\text{ }\text{ }\text{ }\text{ }\text{
}\text{ }\text{ }\text{ }\text{ }\text{ }\text{ }\text{ }\text{
}\text{ }r \geq 2,\endaligned$$which define the quotient ring
$\frac{\text{Sym}^{\bullet}(\mathbb{S}_{(3,1,1,\cdots,1,0)}W)}{<\mathbb{S}_{(4,3,2,\cdots,2,1)}W>}$.

To ease notations and references, we introduce the notion of ideal
projectors(cf. \cite{B:algebra}, \cite{dB:ideal}, \cite{dB:limits},
\cite{S:limit}).

\begin{defn}
(cf. \cite{B:algebra}) A linear idempotent map $P$ on
$\mathbb{C}[\mathbf{x}]$ is called an \textbf{ideal projector} if
$\text{ker} P$ is an ideal in $\mathbb{C}[\mathbf{x}]$.
\end{defn}

We will use \emph{de Boor's formula}:

\begin{thm}\label{dB} \emph{(\cite{dB:ideal}, de Boor)} A linear mapping $P : \mathbb{C}[\mathbf{x}] \rightarrow \mathbb{C}[\mathbf{x}] $ is an
ideal projector if and only if the equality
\begin{equation}\label{deBoor}
P(gh) = P(gP(h))
\end{equation}
 holds for all $g, h \in \mathbb{C}[\mathbf{x}]$.
\end{thm}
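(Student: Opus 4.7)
The plan is to verify the two implications of de Boor's criterion directly from the definition of an ideal projector, in each case using only the linearity of $P$ together with a single clever specialization of $g$ or $h$. The proof is short and requires no machinery beyond what is already in the definition.

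For the reverse direction $(\Leftarrow)$, I would assume the functional identity $P(gh)=P(gP(h))$ holds for all $g,h\in\mathbb{C}[\mathbf{x}]$. Setting $g=1$ immediately yields $P(h)=P(P(h))$ for every $h$, which is idempotence $P^2=P$. To show $\ker P$ is an ideal, take any $f\in\ker P$ and any $g\in\mathbb{C}[\mathbf{x}]$; then
$$P(gf)=P(gP(f))=P(g\cdot 0)=0,$$
so $gf\in\ker P$. Combined with the linearity of $P$, this shows $\ker P$ is closed under multiplication by arbitrary polynomials, hence is an ideal, so $P$ is an ideal projector.

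For the forward direction $(\Rightarrow)$, assume $P$ is an ideal projector. The key observation is that for any $h$ the remainder $h-P(h)$ lies in $\ker P$, since by idempotence $P(h-P(h))=P(h)-P(P(h))=P(h)-P(h)=0$. Because $\ker P$ is an ideal, multiplication by any $g\in\mathbb{C}[\mathbf{x}]$ keeps us inside, so $g(h-P(h))\in\ker P$ and hence $P\bigl(g(h-P(h))\bigr)=0$. Expanding by linearity yields the desired identity $P(gh)=P(gP(h))$.

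Both directions ultimately reduce to the decomposition $h=P(h)+(h-P(h))$, which splits $h$ into an ``interpolant'' lying in the image of $P$ and a ``remainder'' lying in $\ker P$. There is essentially no obstacle here; what is conceptually pleasing, and the only thing worth highlighting, is that the single functional identity~(\ref{deBoor}) simultaneously encodes both the idempotence of $P$, recovered by setting $g=1$, and the ideal property of $\ker P$, recovered by setting $h\in\ker P$.
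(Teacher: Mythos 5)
Your proof is correct. Note that the paper does not prove this statement at all --- it is quoted from de Boor's paper \cite{dB:ideal} as a known result --- so there is no in-text argument to compare against. Your two-line verification (idempotence from the specialization $g=1$, the ideal property of $\ker P$ from the specialization $P(h)=0$, and the converse via the decomposition $h=P(h)+(h-P(h))$ with remainder in the ideal $\ker P$) is the standard and complete argument, and it correctly accounts for both clauses of the paper's definition of an ideal projector (idempotence \emph{and} $\ker P$ being an ideal).
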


Let $\mathcal{P}$ be the space of ideal projectors onto span
$\{1,x_1,...,x_d\}$, in other words,
$$ \mathcal{P}:=\{P : \text{ideal projector } | \text{ ker} P \in
V_d\}.
$$ The space $\mathcal{P}$ is isomorphic to the symmetric affine
subscheme $V_d$ \cite[p3]{S:bivideal}. For the sake of simplicity,
we prefer to work on $\mathcal{P}$ in place of $V_d$.

First we consider the natural embedding of $\mathcal{P}$. Gustavsen,
Laksov and Skjelnes \cite{GLS:elem} gave more general description of
open affine coverings of Hilbert schemes of points.

\begin{lem}
The space $\mathcal{P}$ can be embedded into
$\mathbb{C}^{(d+1){{d+1}\choose 2}}$.
\end{lem}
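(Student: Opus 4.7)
The plan is to embed $\mathcal{P}$ into $\mathbb{C}^{(d+1)\binom{d+1}{2}}$ by recording only the values of $P$ on the degree-$2$ monomials, and then to use de~Boor's formula to argue that this data determines $P$ completely.

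First I would fix notation. Since $P$ is an ideal projector onto $\tn{span}\{1,x_1,\ldots,x_d\}$ and $P\circ P = P$, we automatically have $P(1)=1$ and $P(x_i)=x_i$ for $i=1,\ldots,d$, so $P$ is pinned down on $\tn{span}\{1,x_1,\ldots,x_d\}$. For $1\leq i\leq j\leq d$, write
\[
P(x_ix_j) \; = \; c_{ij}^{0}\cdot 1 + \sum_{k=1}^{d} c_{ij}^{k}\,x_k.
\]
The number of such unordered pairs $(i,j)$ is $\binom{d+1}{2}$ and for each there are $d+1$ coefficients, so the assignment
\[
\Phi:\mathcal{P}\longrightarrow\mathbb{C}^{(d+1)\binom{d+1}{2}},\qquad
P\;\longmapsto\;\bigl(c_{ij}^{k}\bigr)_{1\leq i\leq j\leq d,\;0\leq k\leq d}
\]
is well-defined.

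Next I would show $\Phi$ is a (closed) embedding by proving that $P$ is recoverable from the data $(c_{ij}^{k})$. I would induct on the degree of a monomial $m=x_{i_1}\cdots x_{i_n}$ with $n\geq 2$. The base case $n=2$ is the definition of $c_{ij}^{k}$. For the inductive step, Theorem~\ref{dB} (de~Boor's formula) gives
\[
P(x_{i_1}\cdots x_{i_n}) \; = \; P\bigl(x_{i_1}\cdot P(x_{i_2}\cdots x_{i_n})\bigr).
\]
By induction $P(x_{i_2}\cdots x_{i_n})$ lies in $\tn{span}\{1,x_1,\ldots,x_d\}$ and its coefficients are polynomial functions of the $c_{ij}^{k}$. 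Expanding by linearity reduces the right-hand side to a sum of terms $P(x_{i_1})$ and $P(x_{i_1}x_k)$ for $k=1,\ldots,d$, each of which is among the recorded coordinates. Hence every value of $P$ is a polynomial in the entries of $\Phi(P)$, so $\Phi$ is injective on points, and in fact realizes $\mathcal{P}$ as the subscheme of $\mathbb{C}^{(d+1)\binom{d+1}{2}}$ cut out by the compatibility (consistency) relations forced by the two different orderings in de~Boor's identity.

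The main obstacle, such as it is, is purely bookkeeping: one has to verify that the recursion in the inductive step terminates and gives an answer independent of the order in which variables are peeled off. Both facts are immediate from $P(gh)=P(gP(h))$, which is symmetric in $g$ and $h$ once one observes $P(gh)=P(hg)=P(hP(g))=P(P(g)h)$. This symmetry guarantees well-definedness of the recursive description of $P$ on all of $\mathbb{C}[\mathbf{x}]$, so $\Phi$ genuinely embeds $\mathcal{P}$ into $\mathbb{C}^{(d+1)\binom{d+1}{2}}$ as claimed.
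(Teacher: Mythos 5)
Your proposal is correct and takes essentially the same route as the paper: record the coefficients of $P(x_ix_j)$ in the basis $\{1,x_1,\ldots,x_d\}$ and use de~Boor's identity $P(gh)=P(gP(h))$ to recursively recover $P$ on all monomials, which gives injectivity on points. Like the paper, you only sketch why the map is a scheme-theoretic (closed) embedding rather than merely injective; the paper defers that point to Gustavsen--Laksov--Skjelnes, and your appeal to ``compatibility relations'' is at the same level of detail.
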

\begin{proof}[Sketch of proof]
For each ideal projector $P\in \mathcal{P}$ and each pair $(i,j)$,
$1\leq i,j\leq d$, there is a collection $p_{0,ij},p_{1,ij}, \cdots
,p_{d,ij}$ of complex numbers such that
\begin{equation}\label{pequations}
P(x_i x_j) = p_{0,ij}+\sum_{m=1}^d p_{m,ij}x_m.
\end{equation}
As $(i,j)$ varies over $1\leq i,j\leq d$, each ideal projector $P\in
\mathcal{P}$ gives rise to a collection $p_{0,ij}, p_{r, st}$
$(1\leq i,j,r,s,t \leq d)$ of complex numbers. Of course
$p_{0,ij}=p_{0,ji}$ and $p_{r,st}=p_{r, ts}$. So we have a map $f:
\mathcal{P} \rightarrow\mathbb{C}^{(d+1){{d+1}\choose
2}}=\frac{\mathbb{C}[p_{0,ij},\text{ }\text{ } p_{r, st}]_{1\leq
i,j,r,s,t \leq d}}{(p_{0,ij}-p_{0,ji},\text{ }\text{ }p_{r,st}-p_{r,
ts})}$.

Here we only show that $f$ is one-to-one. It is proved in
\cite{GLS:elem} that $f$ is in fact a scheme-theoretic embedding.

We will show that if $P_1, P_2\in\mathcal{P}$ and if
$f(P_1)=f(P_2)$, i.e. $P_1(x_i x_j)=P_2(x_i x_j)$ for every $(i,j)$,
$1\leq i,j\leq d$, then $P_1=P_2$. Since $P_1$ and $P_2$ are linear
maps, it is enough to check that
$P_1(x_{i_1}...x_{i_r})=P_2(x_{i_1}...x_{i_r})$ for any monomial
$x_{i_1}...x_{i_r}$. This follows from de Boor's formula
(\ref{deBoor}):
$$\aligned P_1(x_{i_1}...x_{i_r})&=P_1(x_{i_1}P_1(x_{i_2}\cdots
P_1(x_{i_{r-1}}x_{i_r})\cdots))\\
&=P_2(x_{i_1}P_2(x_{i_2}\cdots
P_2(x_{i_{r-1}}x_{i_r})\cdots))=P_2(x_{i_1}...x_{i_r}),
\endaligned$$where we have used the property that $P(g)$ is a linear
combination of $1,x_1,\dots,x_d$ for any $g \in
\mathbb{C}[\mathbf{x}]$.
\end{proof}

Next we describe the ideal defining $\mathcal{P}$ in
$$\frac{\mathbb{C}[p_{0,ij},\text{ }\text{ } p_{r, st}]_{1\leq
i,j,r,s,t \leq d}}{(p_{0,ij}-p_{0,ji},\text{ }\text{ }p_{r,st}-p_{r,
ts})}=:R,$$where we keep the notations in the above proof.  Let
$I_{\mathcal{P}}$ denote the ideal.

\begin{lem}\label{idealP}
Let $C(a;j,(i,k))\in R$ denote the coefficient of $x_a$ in
$$P(x_k P(x_i x_j)) - P(x_i P(x_k x_j)) \in R[x_1, \cdots,
x_d].$$ Then $I_{\mathcal{P}}$ is generated by $C(a;j,(i,k))$'s
$(0\leq a \leq d$,$\text{ }$ $1 \leq i,j,k\leq d)$. (We regard an
element in $R[x_1, \cdots, x_d]_0 \cong R$ as a coefficient of
$x_0$.)
\end{lem}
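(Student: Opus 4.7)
The plan is to characterize $I_{\mathcal{P}}$ as the translation of de Boor's formula into equations on the $p$-coordinates, via two matching inclusions.

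For the inclusion $\langle C(a;j,(i,k))\rangle\subseteq I_{\mathcal{P}}$, I would apply (\ref{deBoor}) twice: for any $P\in\mathcal{P}$,
\[
P(x_kP(x_ix_j)) \;=\; P(x_kx_ix_j) \;=\; P(x_ix_kx_j) \;=\; P(x_iP(x_kx_j)),
\]
so the difference in the definition of $C(a;j,(i,k))$ is identically zero and each of its coefficients vanishes. For the reverse inclusion, I would fix a point $(p_{a,ij})$ at which every $C(a;j,(i,k))$ vanishes and build an ideal projector realizing these coordinates. The construction extends the prescribed degree-$\leq 2$ data by recursion: set $P(1)=1$, $P(x_i)=x_i$, $P(x_ix_j)=p_{0,ij}+\sum_m p_{m,ij}x_m$, and define
\[
F(i_1,\ldots,i_r) \;:=\; P\bigl(x_{i_1}\cdot F(i_2,\ldots,i_r)\bigr),\qquad F(i_1,i_2):=P(x_{i_1}x_{i_2}),
\]
inductively for $r\geq 3$. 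A straightforward induction shows $F(i_1,\ldots,i_r)$ always lies in the span of $1,x_1,\ldots,x_d$, so the expression $x_{i_1}\cdot F(i_2,\ldots,i_r)$ is a linear combination of degree-$\leq 2$ monomials on which $P$ has already been specified.

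The main obstacle is proving that $F(i_1,\ldots,i_r)$ depends only on the multiset $\{i_1,\ldots,i_r\}$, so that setting $P(x_{i_1}\cdots x_{i_r}):=F(i_1,\ldots,i_r)$ extends $P$ unambiguously to all monomials. I would argue by induction on $r$, checking invariance under each adjacent transposition. The $r=2$ base case is commutativity of multiplication. For $r\geq 3$, a transposition at positions $s,s+1$ with $s\geq 2$ corresponds to the transposition at positions $s-1,s$ inside $F(i_2,\ldots,i_r)$, which is fixed by the inductive hypothesis applied to that shorter sequence. The transposition of positions $1$ and $2$ is exactly where the hypothesis on $C(a;j,(i,k))$ enters: writing $g=F(i_3,\ldots,i_r)=c_0+\sum_m c_m x_m$, the difference
\[
P\bigl(x_{i_1}P(x_{i_2}g)\bigr)-P\bigl(x_{i_2}P(x_{i_1}g)\bigr) \;=\; \sum_m c_m\bigl[P(x_{i_1}P(x_{i_2}x_m))-P(x_{i_2}P(x_{i_1}x_m))\bigr]
\]
vanishes termwise, since each bracketed expression is the polynomial whose $x_a$-coefficients are the assumed-vanishing $C(a;m,(i_2,i_1))$.

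Once $F$ is symmetric, $P$ extends linearly to $\mathbb{C}[\mathbf{x}]$, and I would verify de Boor's formula $P(gh)=P(gP(h))$ by induction on the degree of $g$. The case $g=x_i$ is essentially the recursion itself: for $h=x_{j_1}\cdots x_{j_s}$, one has $P(x_ih)=F(i,j_1,\ldots,j_s)=P(x_iF(j_1,\ldots,j_s))=P(x_iP(h))$, and linearity extends this to all $h$. The inductive step $g=x_ig'$ rewrites $P(x_ig'h)=P(x_iP(g'h))$ by the base case, then uses the inductive hypothesis on $g'$ to replace $P(g'h)$ by $P(g'P(h))$; the right-hand side $P(gP(h))=P(x_ig'P(h))$ simplifies similarly to $P(x_iP(g'P(h)))$ by another application of the base case. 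Thus the point $(p_{a,ij})$ corresponds to a genuine ideal projector in $\mathcal{P}$, completing the proof.
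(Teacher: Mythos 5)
Your argument captures, in far more detail than the paper's own two-sentence sketch, the real combinatorial content of the lemma: de Boor's identity lets one define $P$ on all monomials by the recursion $F(i_1,\dots,i_r)=P(x_{i_1}F(i_2,\dots,i_r))$, and the vanishing of the $C(a;j,(i,k))$ is exactly the well-definedness of this extension under reordering, after which the full identity $P(gh)=P(gP(h))$ follows by induction on $\deg g$. This reduction of the general relations $P(gP(h))=P(hP(g))$ to the special cases $g=x_k$, $h=x_ix_j$ is precisely the step the paper asserts without proof, and your verification of it (the telescoping through $g=F(i_3,\dots,i_r)=c_0+\sum_m c_mx_m$, using $p_{r,st}=p_{r,ts}$ to kill the $c_0$ term) is correct.

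There is, however, one genuine gap. You work throughout with a single $\mathbb{C}$-point $(p_{a,ij})$, so what you actually prove is that the zero locus of the $C(a;j,(i,k))$ coincides with the image $f(\mathcal{P})$, i.e.\ $\sqrt{\langle C(a;j,(i,k))\rangle}=\sqrt{I_{\mathcal{P}}}$. The lemma asserts equality of ideals, and here the distinction is not cosmetic: $I_{\mathcal{P}}$ is the scheme-theoretic ideal of the embedding (the paper invokes \cite{GLS:elem} for exactly this point, and notes in a footnote that reducedness of $\text{Hilb}^n(\mathbb{C}^d)$ is open for $d\geq 3$), and the subsequent results (Theorem~\ref{mainthm1} and the Schur-functor presentation of $R/I_d$) require the actual generators of $I_{\mathcal{P}}$, not of its radical. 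The same issue affects your first inclusion: evaluating at $\mathbb{C}$-points only places $C(a;j,(i,k))$ in $\sqrt{I_{\mathcal{P}}}$. The repair is within reach because your construction is purely algebraic and uses nothing about $\mathbb{C}$: run the identical recursion over an arbitrary $\mathbb{C}$-algebra $A$ (equivalently, for the tautological data over $R/\langle C(a;j,(i,k))\rangle$, and dually apply de Boor's identity to the universal ideal projector over $R/I_{\mathcal{P}}$). This identifies the $A$-points of $\mathrm{Spec}(R/\langle C(a;j,(i,k))\rangle)$ and of $\mathrm{Spec}(R/I_{\mathcal{P}})$ compatibly with their closed immersions into $\mathrm{Spec}\,R$, which by the scheme-theoretic embedding statement of \cite{GLS:elem} forces the two ideals to coincide.
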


 For example, if $a\neq j,i,k$ then
\begin{equation}\label{aijkaneqijk}
C(a;j,(i,k))=\sum_{m=1}^d (p_{m,ij}p_{a,km}-p_{m,kj}p_{a,im}).\end{equation}
If $a=k$ then
\begin{equation}C(k;j,(i,k))=p_{0,ij}+\sum_{m=1}^d (p_{m,ij}p_{k,km}-p_{m,kj}p_{k,im}).\end{equation}

\begin{proof}[Proof of Lemma ~\ref{idealP}]
The de Boor's formula (\ref{deBoor}) implies that $I_{\mathcal{P}}$
is generated by coefficients of $x_a$'s ($1\leq a \leq d$) in
$P(gP(h))-P(hP(g))$ (all $g,h\in \mathbb{C}[\mathbf{x}]$). But any
$P(gP(h))-P(hP(g))$ can be generated by $P(x_k P(x_i x_j)) - P(x_i
P(x_k x_j))$'s.
\end{proof}

We note that $C(a;j,(i,k))+C(a;j,(k,i))=0$ so from now on we
identify $C(a;j,(i,k))$ with $-C(a;j,(k,i))$.

\begin{lem}
In fact, $I_{\mathcal{P}}$ is generated by $C(a;j,(i,k))$'s $(1\leq
a \leq d$,$\text{ }$ $1 \leq i,j,k\leq d)$.
\end{lem}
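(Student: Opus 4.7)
The plan is to show that every $C(0;j,(i,k))$ lies in the ideal $I' := \langle C(a; j', (i', k')) : a \geq 1 \rangle$. The crucial observation is that modulo $I'$, each coordinate $p_{0, \gamma\beta}$ can be expressed as a quadratic polynomial in the $p_{m, st}$. Indeed, for any index $b \neq \gamma$ with $b \geq 1$, a direct expansion of the formulas for the coefficients of $x_a$ shows
\[
C(b; \beta, (\gamma, b)) = p_{0, \gamma\beta} + \sum_{m}\bigl(p_{m, \gamma\beta}\,p_{b, bm} - p_{m, b\beta}\,p_{b, \gamma m}\bigr),
\]
and since this element lies in $I'$, we obtain
\[
p_{0, \gamma\beta} \equiv -\sum_{m}\bigl(p_{m, \gamma\beta}\,p_{b, bm} - p_{m, b\beta}\,p_{b, \gamma m}\bigr) \pmod{I'}.
\]

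Given this, I would substitute the above expression into
\[
C(0;j,(i,k)) = \sum_{m}\bigl(p_{m,ij}\,p_{0,km} - p_{m,kj}\,p_{0,im}\bigr)
\]
for each occurrence of $p_{0,*}$, using an auxiliary index $b$ distinct from the relevant free indices (always possible for $d \geq 2$). After expansion, this yields a cubic polynomial in the $p_{n, st}$ that is congruent to $C(0; j, (i, k))$ modulo $I'$. The second step will be to show that this cubic residue is itself in $I'$ by iteratively applying the congruence
\[
\sum_{m}p_{m,\gamma\beta}\,p_{n,\delta m}-\sum_{m}p_{m,\delta\beta}\,p_{n,\gamma m}\equiv -p_{0,\gamma\beta}\,\delta_{n,\delta}+p_{0,\delta\beta}\,\delta_{n,\gamma}\pmod{I'},
\]
which is simply the rewriting of $C(n;\beta,(\gamma,\delta))\in I'$ for $n\ge 1$, together with the symmetries $p_{m,ij}=p_{m,ji}$ and the cocycle identity $C(a;i,(j,k)) + C(a;j,(k,i)) + C(a;k,(i,j)) = 0$ among the generators.

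The hardest part will be the combinatorial bookkeeping: the substitution generates many cubic monomials, and one must verify that after applying the rewriting identities all terms can be grouped as $R$-multiples of $C(a;*,*)$ with $a \geq 1$, with no residual $a=0$ term left over. A minor wrinkle is the choice of the auxiliary index $b$: when $i$ or $k$ coincides with the natural preferred label one picks another index, but the symmetry of the construction in $b$ ensures the argument is unchanged. Once the cubic collapses into $I'$, the lemma follows.
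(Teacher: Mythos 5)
Your plan is essentially the paper's own proof: the paper likewise fixes an auxiliary index $u\ge 1$, uses $C(u;k,(m,u))$ and $C(u;i,(m,u))$ to replace $p_{0,km}$ and $p_{0,im}$ by quadratics modulo the ideal generated by the $C(a;\cdot)$ with $a\ge 1$, and then regroups the resulting cubic as an $R$-linear combination of such generators. The only difference is that the paper carries out the ``combinatorial bookkeeping'' you defer, exhibiting the explicit combination
$-\sum_t\big(p_{u,tu}C(t;j,(i,k))-p_{t,ku}C(u;i,(j,t))+p_{t,iu}C(u;k,(j,t))\big)+\sum_m p_{u,jm}C(m;u,(k,i))+\sum_m\big(p_{m,ij}C(u;k,(m,u))-p_{m,kj}C(u;i,(m,u))\big)$.
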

\begin{proof}
It is enough to prove that for any $1 \leq i,j,k\leq d$, the
polynomial $C(0;j,(i,k))$ is generated by $C(a;b,(e,f))$'s $(1 \leq
a,b,e,f\leq d)$. Fix any $u$, $1 \leq u\leq d$. Then we have
$$\aligned C(0;j,&(i,k))
=\sum_{m=1}^d
(p_{m,ij}p_{0,km}-p_{m,kj}p_{0,im})\\
=-\sum_{m=1}^d \Big(&p_{m,ij} \sum_{t=1}^d
(p_{t,km}p_{u,tu}-p_{t,ku}p_{u,tm})
 -p_{m,kj}\sum_{t=1}^d(p_{t,im}p_{u,tu}-p_{t,iu}p_{u,tm})\Big)\\
+\sum_{m=1}^d \Big(&p_{m,ij} C(u;k,(m,u))
 -p_{m,kj}C(u;i,(m,u))\Big)
  \endaligned$$
 $$\aligned
 =-\sum_{t=1}^d \Big(&p_{u,tu}\sum_{m=1}^d (p_{m,ij}p_{t,km}-p_{m,kj}p_{t,im})\\
 -&p_{t,ku}\sum_{m=1}^d(p_{m,ij}p_{u,tm}-p_{m,it}p_{u,jm})
 +p_{t,iu}\sum_{m=1}^d(p_{m,kj}p_{u,tm}-p_{m,kt}p_{u,jm})\Big)\\
 +\sum_{m=1}^d
 &p_{u,jm}\sum_{t=1}^d(p_{t,ku}p_{m,it}-p_{t,iu}p_{m,kt})\\
 +\sum_{m=1}^d \Big(&p_{m,ij} C(u;k,(m,u))
 -p_{m,kj}C(u;i,(m,u))\Big)
 \endaligned$$
 $$\aligned
 =-\sum_{t=1}^d \Big(&p_{u,tu}C(t;j,(i,k))
 -p_{t,ku}C(u;i,(j,t))
 +p_{t,iu}C(u;k,(j,t))\Big)\text{ }\text{ }\text{ }\text{ }\text{ }\text{ }\text{ }\text{ }\text{ }\text{ }\text{ }\text{ }\text{ }\\
 +\sum_{m=1}^d
 &p_{u,jm}C(m;u,(k,i))\\
 +\sum_{m=1}^d \Big(&p_{m,ij} C(u;k,(m,u))
 -p_{m,kj}C(u;i,(m,u))\Big).
 \endaligned$$
\end{proof}

So the set of generators of $I_{\mathcal{P}}$ is
$$\{C(a;j,(i,k))\text{ } | \text{ }1 \leq a,i,j,k\leq d)\}.$$
We associate to this a representation of $GL(W)$, where $W$ is a $d$-dimensional vector space.
\begin{prop}\label{vector_space}
The $\mathbb{C}$-vector space $Y$ of generators
$$\frac{<C(a;j,(i,k))\text{ } | \text{ }1 \leq a,i,j,k\leq d)>}{C(a;j,(i,k))+C(a;j,(k,i))}$$
is canonically isomorphic to
$$
\mathbb{S}_{(3,2,1,\cdots,1,0)}W \bigoplus
\mathbb{S}_{(3,1,1,\cdots,1,1)}W$$ as $\mathbb{C}$-vector spaces,
where $W$ is a $d$-dimensional vector space and $
\mathbb{S}_{(3,2,1,\cdots,1,0)}$ $($resp.
$\mathbb{S}_{(3,1,1,\cdots,1,1)})$ is the Schur functor
corresponding to the partition $(3,2,1,\cdots,1,0)$ $($resp.
$(3,1,1,\cdots,1,1))$ of $(d+2)$.
\end{prop}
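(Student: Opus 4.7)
The plan is to realize $Y$ as a direct sum of two specific Schur modules via representation-theoretic analysis. Each polynomial $C(a;j,(i,k))$ is quadratic in the $p_{m,ij}$'s (with the linear $p_0$-terms appearing only when $a\in\{i,k\}$, and handled separately). Since the $p$-variables span $\mathbb{S}_{(3,1,\ldots,1,0)}W$ as a $GL(W)$-representation---the identification that also underlies Theorem~\ref{mainthm1}---the span $Y$ sits naturally as a $GL(W)$-submodule of $\text{Sym}^2(\mathbb{S}_{(3,1,\ldots,1,0)}W)$, and the task is to determine which irreducible constituents of this ambient plethysm actually appear.

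Next, I would use Pieri / Littlewood--Richardson rules to locate inside $\text{Sym}^2(\mathbb{S}_{(3,1,\ldots,1,0)}W)$ two $GL(W)$-equivariant summands: $\mathbb{S}_{(4,3,2,\ldots,2,1)}W$ (the inclusion $j$ furnished by Lemma~\ref{comput} and already used to describe the ideal $I_d$ in Theorem~\ref{mainthm1}) and $\mathbb{S}_{(4,2,2,\ldots,2,2)}W$ (an analogous plethystic construction). As partitions, these equal $(3,2,1,\ldots,1,0)+(1,\ldots,1)$ and $(3,1,1,\ldots,1,1)+(1,\ldots,1)$ respectively, so via the canonical identification $\mathbb{S}_\lambda W \otimes \det W \cong \mathbb{S}_{\lambda+(1,\ldots,1)}W$ they are isomorphic as $\mathbb{C}$-vector spaces to $\mathbb{S}_{(3,2,1,\ldots,1,0)}W$ and $\mathbb{S}_{(3,1,1,\ldots,1,1)}W$, exactly in the form stated by the proposition.

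The decisive step is to verify that $Y$ coincides with the sum of these two submodules. One direction would be established by exhibiting a highest weight vector of each target Schur module as an explicit $\mathbb{C}$-linear combination of $C(a;j,(i,k))$'s; the Jacobi-type identity $C(a;j,(i,k)) + C(a;k,(j,i)) + C(a;i,(k,j)) = 0$, which follows from direct cancellation in formula~$(\ref{aijkaneqijk})$, is the main algebraic lever for rewriting such combinations. The reverse direction reduces to a dimension count, and this is where the main obstacle lies: beyond the antisymmetry and the Jacobi identity (which together account for $d\binom{d}{3}$ relations), a further family of $\binom{d}{2}$ relations must be exhibited, arising from the distinct polynomial shape of $C$ in the cases $a\in\{i,k\}$ versus $a\notin\{i,k\}$ and from the interaction with the $p_0$-terms. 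Confirming that this full collection of relations cuts the naive count $d^2\binom{d}{2}$ down to exactly $\dim \mathbb{S}_{(3,2,1,\ldots,1,0)}W + \dim \mathbb{S}_{(3,1,1,\ldots,1,1)}W$ is the principal computational obstacle.
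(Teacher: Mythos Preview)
Your plan has two genuine gaps. First, you propose to embed $Y$ into $\text{Sym}^2(\mathbb{S}_{(3,1,\ldots,1,0)}W)$ using ``the identification that also underlies Theorem~\ref{mainthm1}''. But in the paper's logical order, Proposition~\ref{vector_space} is an ingredient in the proof of Theorem~\ref{mainthm1} (via Lemma~\ref{firstanaly}), not a consequence of it; before the change of variables carried out there, the $p_{r,st}$ span $\bigwedge^{d-1}W\otimes\text{Sym}^2W$, not $\mathbb{S}_{(3,1,\ldots,1,0)}W$. Moreover, the $C(a;j,(i,k))$ are not homogeneous quadratics---the linear $p_{0,ij}$ terms are genuinely present when $a\in\{i,k\}$---so $Y$ does not sit inside $\text{Sym}^2$ of anything without further work. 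Your ``handled separately'' does not explain how to make a $GL(W)$-equivariant embedding out of this.

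Second, you correctly anticipate an extra family of $\binom{d}{2}$ relations, but you mislocate its source. It does not come from the case split $a\in\{i,k\}$ versus $a\notin\{i,k\}$; it is the trace-type identity $\sum_{j=1}^d C(j;j,(i,k))=0$, one for each unordered pair $(i,k)$, and it is verified by direct cancellation in the quadratic terms (the $p_0$-contributions from $j=i$ and $j=k$ cancel by symmetry). The paper's proof bypasses all of the plethysm machinery you propose: it defines an injection $\varphi:Y\to\bigwedge^{d-1}W\otimes W\otimes\bigwedge^2 W$ purely from the index data, sending $C(a;j,(i,k))$ to $(-1)^a(v_1\wedge\cdots\wedge\hat v_a\wedge\cdots\wedge v_d)\otimes v_j\otimes(v_i\wedge v_k)$, decomposes this tensor product by Littlewood--Richardson, and checks that the image avoids the two extraneous summands $\bigwedge^d W\otimes\bigwedge^2 W$ and $\bigwedge^{d-1}W\otimes\bigwedge^3 W$ precisely because of the trace identity and the Jacobi identity. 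Surjectivity then follows from a short argument that no further linear relations exist among the $C(a;j,(i,k))$. This is both simpler and logically prior to Theorem~\ref{mainthm1}.
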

\begin{proof}
Let $W=\bigoplus_{i=1}^d \mathbb{C}v_i$. Define
$$\varphi:Y \longrightarrow \bigwedge^{d-1}W \otimes W
\otimes \bigwedge^2 W$$ by
$$\varphi:C(a;j,(i,k)) \mapsto (-1)^a(v_1\wedge \cdots \wedge
\hat{v_a}\wedge \cdots \wedge v_d)\otimes v_j\otimes(v_i \wedge
v_k). $$ Then it is clear that $\varphi$ is injective.

By Littlewood-Richardson rule, we have
$$\aligned&\bigwedge^{d-1}W \otimes W \otimes \bigwedge^2 W\\
 &\cong \mathbb{S}_{(1,1,1,\cdots,1,0)}W \otimes W \otimes \mathbb{S}_{(1,1,0,\cdots,0,0)}
 W\\
 &\cong
\mathbb{S}_{(3,2,1,\cdots,1,0)}W \bigoplus
\mathbb{S}_{(3,1,1,\cdots,1,1)}W
 \bigoplus (\mathbb{S}_{(2,2,1,\cdots,1,1)}W)^{\oplus
2}\bigoplus \mathbb{S}_{(2,2,2,1,\cdots,1,0)}W\\
&\cong \mathbb{S}_{(3,2,1,\cdots,1,0)}W \bigoplus
\mathbb{S}_{(3,1,1,\cdots,1,1)}W
 \bigoplus\bigwedge^{d}W \otimes \bigwedge^2 W \bigoplus\bigwedge^{d-1}W \otimes
\bigwedge^3 W,\endaligned$$where each partition is of $(d+2)$. We
will show that the image of any element of $Y$ under $\varphi$ lies neither on
$\bigwedge^{d}W \otimes \bigwedge^2 W$ nor $\bigwedge^{d-1}W \otimes
\bigwedge^3 W$.

Since
$$
\sum_{j=1}^d (-1)^j(v_1\wedge \cdots \wedge \hat{v_j}\wedge \cdots
\wedge v_d)\otimes v_j\otimes(v_i \wedge v_k),\text{ }\text{ }\text{
}1\leq i <k\leq d,
$$
generate $\bigwedge^{d}W \otimes \bigwedge^2 W$, we need to show
that
\begin{equation}\label{eq1}
\sum_{j=1}^d C(j;j,(i,k)) =0.
\end{equation} But this is elementary
because
$$\sum_{j=1}^d C(j;j,(i,k))=\sum_{j=1}^d\sum_{m=1}^d (p_{m,ij}p_{j,km}-p_{m,kj}p_{j,im})=0.$$

Since
$$\aligned
&(v_1\wedge \cdots \wedge \hat{v_a}\wedge \cdots
\wedge v_d)\otimes v_j\otimes(v_i \wedge v_k)\\
&+(v_1\wedge \cdots \wedge \hat{v_a}\wedge \cdots
\wedge v_d)\otimes v_k\otimes(v_j \wedge v_i)\\
&+(v_1\wedge \cdots \wedge \hat{v_a}\wedge \cdots \wedge v_d)\otimes
v_i\otimes(v_k \wedge v_j),\text{ }\text{ }\text{ }1\leq a\leq
d,\text{ } 1\leq j<i<k\leq d,
\endaligned$$
generate $\bigwedge^{d-1}W \otimes \bigwedge^3 W$, we need to show
that
\begin{equation}\label{eq2}
C(a;j,(i,k))+C(a;k,(j,i))+C(a;i,(k,j)) =0.
\end{equation}
But this is again elementary because
$$
\aligned
&\sum_{m=1}^d (p_{m,ij}p_{a,km}-p_{m,kj}p_{a,im})\\
&+\sum_{m=1}^d (p_{m,jk}p_{a,im}-p_{m,ik}p_{a,jm})\\
&+\sum_{m=1}^d (p_{m,ki}p_{a,jm}-p_{m,ji}p_{a,km})=0.
\endaligned
$$

Therefore $\varphi(Y) \subset \mathbb{S}_{(3,2,1,\cdots,1,0)}W
\bigoplus \mathbb{S}_{(3,1,1,\cdots,1,1)}W$, in other words,
$$\varphi:Y \longrightarrow \mathbb{S}_{(3,2,1,\cdots,1,0)}W
\bigoplus \mathbb{S}_{(3,1,1,\cdots,1,1)}W$$ is injective.

The next lemma completes the proof.
\end{proof}

\begin{lem}
$\varphi:Y \longrightarrow \mathbb{S}_{(3,2,1,\cdots,1,0)}W
\bigoplus \mathbb{S}_{(3,1,1,\cdots,1,1)}W$ is surjective.
\end{lem}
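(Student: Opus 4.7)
The plan is to argue by $GL(W)$-equivariance. The group $GL(W)$ acts naturally on $\mathbb{C}[x_1, \ldots, x_d]$ by linear substitution; this induces an action on the space $\mathcal{P}$ of ideal projectors via $P \mapsto g \circ P \circ g^{-1}$, and hence on the coordinate ring $R$, under which $p_{m,ij}$ transforms as the appropriate tensor in $(\text{Sym}^2 W)^* \otimes W$. A direct inspection of the formulas for $C(a;j,(i,k))$ shows that the subspace $Y$ is $GL(W)$-stable, and under the natural identification $\bigwedge^{d-1}W \cong W^* \otimes \det W$ the map $\varphi$ is $GL(W)$-equivariant.

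Once equivariance is in hand, the argument is essentially automatic. The representations $\mathbb{S}_{(3,2,1,\cdots,1,0)}W$ and $\mathbb{S}_{(3,1,1,\cdots,1,1)}W$ have distinct partitions and are therefore non-isomorphic irreducible $GL(W)$-representations, so any $GL(W)$-stable subspace of their direct sum is one of $\{0, \mathbb{S}_{(3,2,1,\cdots,1,0)}W, \mathbb{S}_{(3,1,1,\cdots,1,1)}W, \text{full sum}\}$. Since $\varphi$ is injective with $Y \neq 0$, its image is nonzero, and it suffices to rule out the two possibilities in which $\varphi(Y)$ equals a single summand. I would do this by exhibiting an element of $Y$ whose image under $\varphi$ has nontrivial projection onto each of the two irreducible factors.

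The main technical point is this last step: producing explicit vectors with nonzero components in each irreducible as $\mathbb{C}$-linear combinations of the $C(a;j,(i,k))$. Candidate highest-weight vectors can be obtained by Young-symmetrizing the standard tensors of appropriate weights in $\bigwedge^{d-1}W \otimes W \otimes \bigwedge^2 W$; what must then be checked is that the resulting elements of $R$ are genuinely nonzero, a verification of the same combinatorial flavor as, and no harder than, the vanishing relations \eqref{eq1} and \eqref{eq2} used in the proof of Proposition \ref{vector_space}. In fact, once a single element mapping nontrivially into each irreducible is produced, irreducibility forces the projection of $\varphi(Y)$ onto that factor to be the whole factor, completing the surjectivity argument.
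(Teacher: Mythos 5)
Your strategy --- equivariance plus Schur's lemma to reduce surjectivity to two non-vanishing statements --- is a genuinely different route from the paper's. The paper proves the lemma by showing directly that every $\mathbb{C}$-linear relation among the $C(a;j,(i,k))$'s is a combination of \eqref{eq1} and \eqref{eq2}: each quadratic monomial $p_{m,ij}p_{a,km}$ occurs in only two or three of the $C$'s, so any cancellation in a putative relation forces an instance of \eqref{eq1} or \eqref{eq2}, and iterating exhausts the relation. That argument bounds $\dim Y$ from below with no representation theory at all. Your reduction is an attractive economy --- you only need to rule out two proper subrepresentations rather than classify all relations --- and the equivariance you invoke is believable and consistent with how the paper uses the $GL(d)$-action elsewhere, though it is not free: the coordinates $p_{r,st}$ naturally transform in $W^{*}\otimes \operatorname{Sym}^2 W$, so matching $\varphi$ with $\bigwedge^{d-1}W\otimes W\otimes\bigwedge^2 W$ requires tracking a determinant twist, which is presumably why Proposition~\ref{vector_space} is stated only as an isomorphism of vector spaces.

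The genuine gap is in your final step, and it is exactly where the content of the lemma lives. To rule out $\varphi(Y)=\mathbb{S}_{(3,2,1,\cdots,1,0)}W$ and $\varphi(Y)=\mathbb{S}_{(3,1,1,\cdots,1,1)}W$ you must actually produce, for each summand, a specific linear combination of the $C$'s (say the image of a Young-symmetrized highest-weight vector) and certify that it is a \emph{nonzero} polynomial in $R$, i.e.\ that some monomial survives all cancellations. You assert this is ``no harder than'' \eqref{eq1} and \eqref{eq2}, but those are identities verified by exhibiting cancellation, whereas here you must certify non-cancellation --- a priori the $C$'s could satisfy further hidden relations killing one of the two summands, and that is precisely what the lemma denies. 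The check is certainly finite and feasible (for $\mathbb{S}_{(3,1,1,\cdots,1,1)}W\cong\operatorname{Sym}^2W\otimes\det W$ it is easy, since that summand is detected by the linear terms $p_{0,ij}$ occurring in $C(k;j,(i,k))$; for $\mathbb{S}_{(3,2,1,\cdots,1,0)}W$ one must exhibit a surviving quadratic monomial, which amounts to a small piece of the paper's monomial-support analysis), but as written your proposal defers the decisive verification, so it is a correct plan rather than a complete proof.
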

\begin{proof}
It is enough to show that there are no nontrivial
$\mathbb{C}$-linear relations among $C(a;j,(i,k))$'s other than
$\mathbb{C}$-linear combinations of (\ref{eq1}) and (\ref{eq2}).

Suppose
\begin{equation}\label{eq3}
C(a;j,(i,k))+\sum_{u,b,e,f} c_{u,b,(e,f)}C(u;b,(e,f)) =0,\text{
}\text{ }\text{ }\text{ }\text{ }\text{ }\text{ }\text{
}c_{u,b,(e,f)}\in \mathbb{C}.
\end{equation}

If $a\neq i,j,k$ then $C(a;j,(i,k))$ contains a term
$p_{m,ij}p_{a,km}$ and a term $p_{m,kj}p_{a,im}$.  The term
$p_{m,ij}p_{a,km}$ appears only in $C(a;j,(i,k))$ and $C(a;i,(k,j))$
among all $C(u;b,(e,f))$, $1\leq u,b,e,f\leq d$. Similarly the term
$p_{m,kj}p_{a,im}$ appears only in $C(a;j,(i,k))$ and
$C(a;k,(j,i))$. So the left hand side of (\ref{eq3}) must be a
nontrivial linear combination of (\ref{eq2}) and other relations.

Similarly even if $a=i,j,$ or $k$, each term in $C(a;j,(i,k))$
appears only in the ones involved in (\ref{eq1}) or (\ref{eq2}). To
get cancelation among these, the left hand side of (\ref{eq3}) must
contain (\ref{eq1}) or (\ref{eq2}). Repeating the argument,
(\ref{eq3}) becomes a linear combination of (\ref{eq1}) and
(\ref{eq2}).
\end{proof}

The following decomposition of Schur functors will be used later.

\begin{lem}\label{comput}
We have $$\bigwedge^{d-1}W \otimes \emph{Sym}^2 W\cong
\mathbb{S}_{(2,1,1,\cdots,1,1)}W\oplus
\mathbb{S}_{(3,1,1,\cdots,1,0)}W,$$and
$$\aligned
\emph{Sym}^{2}(\mathbb{S}_{(3,1,1,\cdots,1,0)}W)\cong
&\mathbb{S}_{(6,2,2,\cdots,2,0)}W \oplus
\mathbb{S}_{(5,3,2,\cdots,1,1)}W \oplus
\mathbb{S}_{(5,2,2,\cdots,2,1)}W\\
& \oplus \mathbb{S}_{(4,4,2,\cdots,2,0)}W \oplus
\mathbb{S}_{(4,3,2,\cdots,2,1)}W \oplus
\mathbb{S}_{(4,2,2,\cdots,2,2)}W. \endaligned$$ (If $d=3$ then
$\mathbb{S}_{(5,3,2,\cdots,1,1)}W$ does not appear.)
\end{lem}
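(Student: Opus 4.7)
My plan is to treat the two isomorphisms separately: the first is a direct application of Pieri's rule, and the second combines the Littlewood--Richardson expansion of $\mathbb{S}_\lambda W\otimes\mathbb{S}_\lambda W$ with a plethystic symmetric/antisymmetric split.

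For the first isomorphism, observe that $\bigwedge^{d-1}W=\mathbb{S}_{(1^{d-1},0)}W$ and $\text{Sym}^2 W=\mathbb{S}_{(2)}W$, and invoke Pieri's rule: the Schur summands of $\mathbb{S}_{(1^{d-1},0)}W\otimes\mathbb{S}_{(2)}W$ are indexed by partitions $\mu$ obtained from $(1^{d-1},0)$ by adjoining a horizontal strip of size two. A direct case check shows that the only admissible $\mu$ of length at most $d$ are $(3,1^{d-2},0)$ (both new boxes appended to row one) and $(2,1^{d-1})$ (one new box extending row one, the other starting row $d$); every other placement either puts two new boxes in the same column or breaks the partition ordering. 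This gives the claimed two-term decomposition.

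For the second isomorphism, set $\lambda=(3,1^{d-2},0)$. First expand the tensor square
\[
\mathbb{S}_\lambda W\otimes\mathbb{S}_\lambda W \;=\; \bigoplus_{\mu} c^{\mu}_{\lambda,\lambda}\,\mathbb{S}_\mu W
\]
by Littlewood--Richardson; because $\lambda$ is a hook-like shape, the resulting list of $\mu$ (partitions of $2(d+1)$ of length at most $d$) is short and can be enumerated directly from the rule on LR skew tableaux. Next isolate the symmetric part using the character identity
\[
\text{ch}\bigl(\text{Sym}^2\mathbb{S}_\lambda W\bigr)\;=\;\tfrac{1}{2}\bigl(s_\lambda^{2}+s_\lambda[p_2]\bigr),
\]
where $s_\lambda[p_2]$ is the plethystic substitution of the power sum $p_2$ into $s_\lambda$. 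The plethysm expands by Littlewood's 2-quotient rule as a signed sum $\sum_\mu(-1)^{\mathrm{ht}(\mu)}s_\mu$ over partitions of $2(d+1)$ with empty 2-core and 2-quotient determined by $\lambda$. Averaging this with the Littlewood--Richardson expansion isolates exactly the six Schur summands appearing in the statement. The case $d=3$ is checked separately: the would-be summand indexed by $(5,3,1,1)$ has length $4>d$ and therefore vanishes on a three-dimensional $W$, which matches the parenthetical exception.

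The main obstacle is the bookkeeping in the symmetric/antisymmetric split: the raw tensor square potentially contains more than six irreducibles with multiplicities, and one must verify that the signs in $s_\lambda[p_2]$ cancel precisely the contributions that would otherwise land in $\wedge^2\mathbb{S}_\lambda W$. Since the shape of $\lambda$ is uniform in $d$, the argument reduces to a finite combinatorial check (for $d\geq 4$), which in practice can be cross-validated against a symbolic algebra package such as Sage or LiE.
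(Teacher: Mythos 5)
Your proposal is correct and is in substance the same as the paper's proof: the paper derives the first isomorphism from the Littlewood--Richardson (Pieri) rule and obtains the second from the plethysm algorithm of \cite{R:comb}, which is precisely the $\tfrac12\bigl(s_\lambda^2+s_\lambda[p_2]\bigr)$ computation with the $2$-quotient rule that you describe. The only caveat is that, as in the cited reference, the final enumeration of the six summands (and the stability in $d$ that reduces it to a finite check) must actually be carried out rather than asserted.
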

\begin{proof}
The first isomorphism follows from the Littlewood-Richardson rule.
The second isomorphism can be calculated by
\cite[pp.124--128]{R:comb}.
\end{proof}

\begin{lem}\label{firstanaly}
There is an injective homomorphism
 $$j:\mathbb{S}_{(4,3,2,\cdots,2,1)}W \hookrightarrow \emph{Sym}^{2}\Big(\bigwedge^{d-1}W \otimes \emph{Sym}^2
W\Big)$$ such that $\mathcal{P}$ $($hence the symmetric affine open subscheme
$V_d )$ is isomorphic to
$$\emph{Spec}
\frac{\emph{Sym}^{\bullet}(\bigwedge^{d-1}W \otimes \emph{Sym}^2
W)}{<\mathbb{S}_{(4,3,2,\cdots,2,1)}W>},$$ where
$(4,3,2,\cdots,2,1)$ is a partition of $(2d+2)$.
\end{lem}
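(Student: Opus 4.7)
The plan is to use the explicit formulas for the generators of $I_{\mathcal{P}}$ (Lemma~\ref{idealP}) to eliminate the constant-term variables $p_{0,ij}$ and present $\mathcal{O}(\mathcal{P})$ as a quotient of the polynomial ring in the $p_{m,ij}$'s alone, with that ring identified as $\text{Sym}^{\bullet}(\bigwedge^{d-1}W \otimes \text{Sym}^{2}W)$.

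First I would equip $R$ with the weighted grading in which $p_{0,ij}$ has degree $2$ and each $p_{m,st}$ has degree $1$; under this grading each $C(a;j,(i,k))$ is homogeneous of degree $2$, so $I_{\mathcal{P}}$ is a homogeneous ideal. The generator
$$C(k;j,(i,k)) \;=\; p_{0,ij}+\sum_{m=1}^d\bigl(p_{m,ij}p_{k,km}-p_{m,kj}p_{k,im}\bigr)$$
expresses $p_{0,ij}$, modulo $I_{\mathcal{P}}$, as a quadratic polynomial in the $p_{m,st}$'s. Hence the images of the $p_{m,st}$'s already generate $\mathcal{O}(\mathcal{P}) = R/I_{\mathcal{P}}$ as a $\mathbb{C}$-algebra, and identifying $\text{span}_{\mathbb{C}}\{p_{m,st}\}$ with $\bigwedge^{d-1}W\otimes \text{Sym}^{2}W$ via the Hodge-type convention of Proposition~\ref{vector_space} (so that the index $m$ corresponds to $(-1)^m v_1\wedge\cdots\wedge\hat{v}_m\wedge\cdots\wedge v_d$) yields a $GL(W)$-equivariant surjective ring map
$$\Psi : \text{Sym}^{\bullet}\bigl(\bigwedge^{d-1}W\otimes \text{Sym}^{2}W\bigr) \twoheadrightarrow \mathcal{O}(\mathcal{P}).$$

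Next I would identify $\ker\Psi$ in degree $2$; higher degrees are generated from this since $I_{\mathcal{P}}$ is generated in weighted degree $2$. In degree $2$, $\ker\Psi$ is spanned by the images of (i) the purely quadratic generators $C(a;j,(i,k))$ with $a\notin\{i,k\}$, and (ii) the consistency differences $C(k;j,(i,k))-C(k';j,(i,k'))$ together with their $a=i$ analogues, in which the $p_{0,\cdot}$ terms cancel. By Proposition~\ref{vector_space} the ambient $GL(W)$-span of all the $C$'s is $Y \cong \mathbb{S}_{(3,2,1,\cdots,1,0)}W \oplus \mathbb{S}_{(3,1,1,\cdots,1,1)}W$, and the "$p_{0,ij}$-parts" constitute a canonical $GL(W)$-subrepresentation isomorphic to $\text{Sym}^{2}W$ of dimension $\binom{d+1}{2}$. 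Quotienting by this subrepresentation and comparing with the Schur decomposition of $\text{Sym}^{2}(\mathbb{S}_{(3,1,\cdots,1,0)}W)$ from Lemma~\ref{comput}, a $GL(W)$-character count identifies the image of the surviving quadratic relations with exactly the summand $\mathbb{S}_{(4,3,2,\cdots,2,1)}W \subset \text{Sym}^{2}(\bigwedge^{d-1}W\otimes \text{Sym}^{2}W)$; the map $j$ of the lemma is then the inclusion of this summand.

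The main obstacle is the final identification: one must confirm that the image of the surviving quadratic relations is precisely $\mathbb{S}_{(4,3,2,\cdots,2,1)}W$ with no contribution from any of the other five Schur summands listed in Lemma~\ref{comput}, and that no further irreducible components slip into the kernel through the consistency differences. I would handle this by exhibiting an explicit highest-weight vector for $\mathbb{S}_{(4,3,2,\cdots,2,1)}W$ as a specific $\mathbb{C}$-linear combination of the $C(a;j,(i,k))$'s, verifying that this vector is nonzero modulo the elimination of the $p_{0,ij}$'s, and then invoking Schur's lemma together with the dimension accounting forced by Proposition~\ref{vector_space} and Lemma~\ref{comput} to exclude any additional irreducible contributions.
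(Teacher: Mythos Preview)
Your approach is essentially the paper's: eliminate the $p_{0,ij}$'s via $C(k;j,(i,k))=p_{0,ij}+\cdots$, identify the remaining polynomial ring with $\text{Sym}^{\bullet}(\bigwedge^{d-1}W\otimes\text{Sym}^2 W)$, and then pin down the surviving quadratic relations as a Schur module using Proposition~\ref{vector_space} and Lemma~\ref{comput}.

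The one place where the paper is more efficient is your ``main obstacle''. You propose to locate $\mathbb{S}_{(4,3,2,\cdots,2,1)}W$ by exhibiting a highest-weight vector and then excluding the other five summands of $\text{Sym}^2(\mathbb{S}_{(3,1,\cdots,1,0)}W)$ by a character count. The paper bypasses this entirely. It observes that the elimination is an honest change of coordinates $R\cong T[p'_{0,ij}]$, so $R/I_{\mathcal P}\cong T/I_{\mathcal P}T$ with $I_{\mathcal P}T$ still generated by the (now quadratic) images of the $C(a;j,(i,k))$'s; and since the choice of which $C(k;j,(i,k))$ is used to eliminate $p_{0,ij}$ does not affect $I_{\mathcal P}T$, your ``consistency differences'' carry no new content. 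Then Proposition~\ref{vector_space} already says $Y\cong \mathbb{S}_{(3,2,1,\cdots,1,0)}W\oplus \mathbb{S}_{(3,1,1,\cdots,1,1)}W$, and the elimination kills exactly the $\text{Sym}^2 W\cong \mathbb{S}_{(3,1,1,\cdots,1,1)}W$ summand, so the space of surviving relations is precisely $\mathbb{S}_{(3,2,1,\cdots,1,0)}W$ with no further checking needed. Finally, sitting inside $\text{Sym}^2(\bigwedge^{d-1}W\otimes\text{Sym}^2 W)$ this representation is twisted by the one-dimensional $\bigwedge^d W$, giving
\[
\bigwedge^d W\otimes\mathbb{S}_{(3,2,1,\cdots,1,0)}W\;\cong\;\mathbb{S}_{(4,3,2,\cdots,2,1)}W,
\]
which by Lemma~\ref{comput} is indeed a summand. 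So your plan works, but the highest-weight-vector hunt and the exclusion of the other summands are unnecessary once you use Proposition~\ref{vector_space} in this sharper way.
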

\begin{proof}
Consider a diagram
$$\begin{CD} \frac{\mathbb{C}[p'_{0,ij}, \text{ }p'_{r, st}]_{1\leq i,j,r,s,t
\leq d}}{(p'_{0,ij}-p'_{0,ji}, \text{ }p'_{r,st}-p'_{r,ts})} @<f<<
\frac{\mathbb{C}[p_{0,ij}, \text{ }p_{r, st}]_{1\leq
i,j,r,s,t \leq d}}{(p_{0,ij}-p_{0,ji}, \text{ }p_{r,st}-p_{r,ts})}=:R\\
@VVgV     @.\\ T:=\frac{\mathbb{C}[p'_{r, st}]_{1\leq r,s,t \leq
d}}{(p'_{r,st}-p'_{r,ts})}  @.
\end{CD}$$
where $g$ is the natural projection and  $f^{-1}$ is defined by
$$\aligned
&p'_{0,ij} \mapsto C(i+1; j,(i,i+1)),\text{ }\text{ }\text{ }\text{
}\text{ }\text{ }\text{ }\text{ }\text{ }1\leq i\leq
j\leq d,\\
&\text{ }\text{ }\text{ }\text{ }\text{ }\text{ }\text{ }\text{
}\text{ }\text{ }\text{
}\text{ }\text{ }\text{(if } i=d \text{ then }i+1:=1) \\
 &p'_{r,st} \mapsto p_{r,st},\text{ }\text{ }\text{ }\text{ }\text{ }\text{ }\text{ }\text{ }\text{ }\text{ }\text{ }\text{ }\text{ }\text{ }\text{ }\text{ }\text{ }\text{ }\text{ }\text{ }\text{ }\text{ }\text{ }\text{ }\text{ }\text{ }\text{ }\text{ }\text{ }\text{
}1\leq r, s, t\leq d.\endaligned$$ In fact $f$
is an isomorphism because $p_{0,ij}$ is a linear term in
$$C(i+1;j,(i,i+1))=p_{0,ij}+\sum_{m=1}^d
(p_{m,ij}p_{(i+1),(i+1)m}-p_{m,(i+1)j}p_{(i+1),im}).$$

Since $C(i+1;j,(i,i+1))\in I_{\mathcal{P}}$, we have an induced
isomorphism
\begin{equation}\label{equ1}\frac{R}{I_{\mathcal{P}}}\cong\frac{T}{I_{\mathcal{P}}T},\end{equation}
where $I_{\mathcal{P}}T$ is the expansion of $I_{\mathcal{P}}$ to
$T$. We note that in this construction $C(i+1; j,(i,i+1))$ can be
replaced by any $C(k;j,(i,k))$ or $C(k;i,(j,k))$ ($k\neq i,j$),
because the resulting $I_{\mathcal{P}}T$ does not depend on the
choice $C(k;j,(i,k))$ or $C(k;i,(j,k))$. In fact this construction
is natural in the sense that we eliminate all the linear terms
appearing in $C(a;j,(i,k))$ so that the ideal $I_{\mathcal{P}}T$ is
generated by quadratic equations.

Since $p'_{0,ij}$ are eliminated under passing $g$, the direct
summand $\mathbb{S}_{(3,1,1,\cdots,1,1)}W(\cong \text{Sym}^2 W)$ in
$W$ is eliminated. Then, by Proposition~\ref{vector_space}, the
vector space of generators of $I_{\mathcal{P}}T$ is canonically
isomorphic to $\mathbb{S}_{(3,2,1,\cdots,1,0)}W$ hence to
$$\bigwedge^{d}W \otimes\mathbb{S}_{(3,2,1,\cdots,1,0)}W \cong
\mathbb{S}_{(4,3,2,\cdots,2,1)}W \subset
\text{Sym}^2\Big(\bigwedge^{d-1}W \otimes \text{Sym}^2 W\Big),$$where the
last containment follows from Lemma~\ref{comput}.

The isomorphism of rings $$T =\frac{\mathbb{C}[p'_{r, st}]_{1\leq
r,s,t \leq d}}{(p'_{r,st}-p'_{r,ts})}\cong
\text{Sym}^{\bullet}\Big(\bigwedge^{d-1}W \otimes \text{Sym}^2 W\Big)$$
naturally induces the isomorphism of quotient rings
\begin{equation}\label{equ2}\frac{T}{I_{\mathcal{P}}T} \cong \frac{\text{Sym}^{\bullet}(\bigwedge^{d-1}W \otimes \text{Sym}^2
W)}{<\mathbb{S}_{(4,3,2,\cdots,2,1)}W>}.\end{equation}Combining this
with\begin{picture}(5,5)\put(4,0){(}\end{picture}~\ref{equ1}) gives
the desired result.
\end{proof}

\begin{thm}\label{mainthm}
$\mathcal{P}$ $($hence the symmetric affine open subscheme
$V_d )$ is isomorphic to
$$\mathbb{C}^d \times\emph{Spec}
\frac{\emph{Sym}^{\bullet}(\mathbb{S}_{(3,1,1,\cdots,1,0)}W)}{<\mathbb{S}_{(4,3,2,\cdots,2,1)}W>},$$where
$(3,1,1,\cdots,1,0)$ is a partition of $(d+1)$ and
$(4,3,2,\cdots,2,1)$ is of $(2d+2)$.
\end{thm}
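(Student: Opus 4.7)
The plan is to combine Lemma~\ref{firstanaly} with Lemma~\ref{comput} and a splitting argument to extract the $\mathbb{C}^d$ factor corresponding to translation of the support of the ideal. By Lemma~\ref{firstanaly},
$$\mathcal{P}\ \cong\ \text{Spec}\frac{\text{Sym}^{\bullet}(\bigwedge^{d-1}W \otimes \text{Sym}^2 W)}{<\mathbb{S}_{(4,3,2,\cdots,2,1)}W>},$$
and the first isomorphism in Lemma~\ref{comput} gives a $GL(W)$-equivariant decomposition
$$\bigwedge^{d-1}W \otimes \text{Sym}^2 W \ \cong\ \mathbb{S}_{(2,1,\cdots,1,1)}W \ \oplus\ \mathbb{S}_{(3,1,\cdots,1,0)}W.$$
Thus $\text{Sym}^{\bullet}(\bigwedge^{d-1}W \otimes \text{Sym}^2 W)$ splits as a tensor product of the two symmetric algebras.

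Next I identify the first summand geometrically. Since $\mathbb{S}_{(2,1^{d-1})}W \cong W \otimes \det W$ has dimension $d$, the ring $\text{Sym}^{\bullet}(\mathbb{S}_{(2,1^{d-1})}W)$ is a polynomial ring in $d$ variables, whose spectrum is $\mathbb{C}^d$. This $d$-dimensional direction is naturally identified with the translation action of $\mathbb{C}^d$ on $V_d$ that shifts the base point of the ideal $\mathfrak{m}^2$.

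The crucial step is to show that the embedding
$$j:\mathbb{S}_{(4,3,2,\cdots,2,1)}W \hookrightarrow \text{Sym}^{2}(\bigwedge^{d-1}W \otimes \text{Sym}^2 W)$$
of Lemma~\ref{firstanaly} factors through the summand $\text{Sym}^2(\mathbb{S}_{(3,1,\cdots,1,0)}W)$, where the existence of a copy of $\mathbb{S}_{(4,3,2,\cdots,2,1)}W$ is guaranteed by the second isomorphism in Lemma~\ref{comput}. Equivalently, none of the quadratic generators $C(a;j,(i,k))$ (viewed in $T$ under the elimination $p_{0,ij}\mapsto 0$ of Lemma~\ref{firstanaly}) projects nontrivially onto $\text{Sym}^2(\mathbb{S}_{(2,1^{d-1})}W)$ or onto $\mathbb{S}_{(2,1^{d-1})}W \otimes \mathbb{S}_{(3,1^{d-2},0)}W$. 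Once this is verified, the ideal $I_{\mathcal{P}}T$ lives entirely in the factor $\text{Sym}^{\bullet}(\mathbb{S}_{(3,1^{d-2},0)}W)$, and consequently
$$\mathcal{P}\ \cong\ \text{Spec}\,\text{Sym}^{\bullet}(\mathbb{S}_{(2,1^{d-1})}W)\ \times\ \text{Spec}\frac{\text{Sym}^{\bullet}(\mathbb{S}_{(3,1^{d-2},0)}W)}{<\mathbb{S}_{(4,3,2^{d-3},1)}W>},$$
which is the desired $\mathbb{C}^d\times(\cdots)$ presentation.

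The main obstacle will be step three, the splitting-off of the ideal. The clean way is to invoke the translation action: $\mathbb{C}^d$ acts freely on $V_d$ by translation on $\text{Spec}\,\mathbb{C}[\mathbf{x}]$, the induced action on the coordinates $p_{r,st}$ has as its trivial quotient exactly $\mathbb{S}_{(3,1^{d-2},0)}W$ (the translation-invariant part), and the defining ideal of $V_d$ is translation-invariant. Alternatively, one may verify directly that applying the canonical trace-like surjection $\bigwedge^{d-1}W\otimes \text{Sym}^2 W \twoheadrightarrow \mathbb{S}_{(2,1^{d-1})}W$ on one tensor factor of each quadratic $C(a;j,(i,k))|_{p_{0,\cdot}=0}$ yields zero, which is a short representation-theoretic check since the image would have to be a $GL(W)$-equivariant map out of $\mathbb{S}_{(4,3,2^{d-3},1)}W$ into $\mathbb{S}_{(2,1^{d-1})}W \otimes (\bigwedge^{d-1}W \otimes \text{Sym}^2 W)$, whose multiplicities can be cut down using the Littlewood--Richardson rule. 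Either approach settles step three, and the theorem follows at once from Lemma~\ref{firstanaly}.
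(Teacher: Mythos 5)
Your overall strategy is the same as the paper's: starting from Lemma~\ref{firstanaly}, decompose $\bigwedge^{d-1}W\otimes\text{Sym}^2W\cong\mathbb{S}_{(2,1,\cdots,1,1)}W\oplus\mathbb{S}_{(3,1,\cdots,1,0)}W$ and show that the quadric generators of $I_{\mathcal{P}}T$ involve only the second summand. The paper does this by the explicit linear change of variables $p'\mapsto q$ followed by a direct verification that no minimal generator of $I_{\mathcal{P}}Q$ contains a variable $q_{s,ss}$; your translation-action route is the conceptual version of that computation. It does work, but it needs one more observation than you supply: translation by $a\in\mathbb{C}^d$ sends $p_{m,st}$ to $p_{m,st}+\delta_{ms}a_t+\delta_{mt}a_s$, so the translation-invariant linear coordinates span exactly $\mathbb{S}_{(3,1,\cdots,1,0)}W$; stability of the ideal under this unipotent action gives, for a quadric generator $g$, only that $\tau_a(g)-g\in I_{\mathcal{P}}T$, and to conclude that $g$ itself is invariant (hence free of the $q_{s,ss}$) you must use that $I_{\mathcal{P}}T$ is homogeneous and generated in degree $2$, so it contains no nonzero element of degree $\leq 1$. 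Invariance of the ideal alone does not give invariance of the generators.

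Your proposed alternative, the ``short representation-theoretic check,'' has a genuine gap and cannot be repaired by Littlewood--Richardson counting alone. By Pieri, $W\otimes\mathbb{S}_{(3,1,\cdots,1,0)}W$ contains $\mathbb{S}_{(3,2,1,\cdots,1,0)}W$, and tensoring with $\mathbb{S}_{(2,1,\cdots,1,1)}W\cong W\otimes\det W$ one finds that $\mathbb{S}_{(2,1,\cdots,1,1)}W\otimes\mathbb{S}_{(3,1,\cdots,1,0)}W$ contains $\mathbb{S}_{(4,3,2,\cdots,2,1)}W$ with multiplicity one. Hence $\mathbb{S}_{(4,3,2,\cdots,2,1)}W$ occurs with total multiplicity \emph{two} in $\text{Sym}^2(\bigwedge^{d-1}W\otimes\text{Sym}^2W)$: once inside $\text{Sym}^{2}(\mathbb{S}_{(3,1,\cdots,1,0)}W)$ (Lemma~\ref{comput}) and once inside the cross term. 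Only the projection to $\text{Sym}^2(\mathbb{S}_{(2,1,\cdots,1,1)}W)\cong\mathbb{S}_{(4,2,\cdots,2,2)}W$ vanishes for free by Schur's lemma; equivariance cannot rule out a nonzero component of the generating copy in the cross term, so an actual computation (the paper's) or the translation-invariance argument is unavoidable at this point.
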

\begin{proof}[Sketch of Proof]
Define an isomorphism of rings $$T=\frac{\mathbb{C}[p'_{r,
st}]_{1\leq r,s,t \leq d}}{(p'_{r,st}-p'_{r,ts})}
\overset{\cong}\longrightarrow \frac{\mathbb{C}[q_{r, st}]_{1\leq
r,s,t \leq
d}}{(q_{r,st}-q_{r,ts})}=:Q$$by\\
\begin{picture}(200,30)
\put(100,12){$p'_{r,st} \mapsto $}
\put(135,12){\Big{\{}}\put(145,23){$q_{r,sr} + q_{s,ss},\text{
}\text{ }\text{ }\text{ }\text{ }\text{ }\text{ }\text{ }\text{ if }
r=t$}\put(145,2){$q_{r,st},\text{ }\text{ }\text{ }\text{ }\text{
}\text{ }\text{ }\text{ }\text{ }\text{ }\text{ }\text{ }\text{
}\text{ }\text{ }\text{ }\text{ }\text{ }\text{if } r\neq s,t.$}
\end{picture}

As a matter of fact this is a natural isomorphism, because the
square of any maximal ideal in $\mathbb{C}[\mathbf{x}]$ satisfies
$p'_{r,sr}-\frac{1}{2}p'_{s,ss}=0$ ($r\neq s$), i.e. $q_{r,sr}=0$.
It is straightforward to check that no element in minimal generators
of $I_{\mathcal{P}}Q$ contains terms involving $q_{s,ss}$, $1\leq
s\leq d$. For example, if $a,i,j,k$ are distinct, then
$$\aligned C(a;j,(i,k))&=\sum_{m=1}^d (p_{m,ij}p_{a,km}-p_{m,kj}p_{a,im})\\
&=\sum_{m\neq a,j,i,k} (p_{m,ij}p_{a,km}-p_{m,kj}p_{a,im})\\
&\text{ }\text{
}+(p_{j,ij}p_{a,kj}-p_{j,kj}p_{a,ij})+(p_{a,ij}p_{a,ka}-p_{a,kj}p_{a,ia})\\
&\text{ }\text{
}+(p_{i,ij}p_{a,ki}-p_{i,kj}p_{a,ii})+(p_{k,ij}p_{a,kk}-p_{k,kj}p_{a,ik})\endaligned$$
becomes
$$\aligned
&\sum_{m\neq a,j,i,k} (q_{m,ij}q_{a,km}-q_{m,kj}q_{a,im})\\
&\text{ }\text{
}+((q_{j,ij}+q_{i,ii})q_{a,kj}-(q_{j,kj}+q_{k,kk})q_{a,ij})+(q_{a,ij}(q_{a,ka}+q_{k,kk})-q_{a,kj}(q_{a,ia}+q_{i,ii}))\\
&\text{ }\text{
}+((q_{i,ij}+q_{j,jj})q_{a,ki}-q_{i,kj}q_{a,ii})+(q_{k,ij}q_{a,kk}-(q_{k,kj}+q_{j,jj})q_{a,ik})\\
&=\sum_{m\neq a,j,i,k} (q_{m,ij}q_{a,km}-q_{m,kj}q_{a,im})\\
&\text{ }\text{
}+(q_{j,ij}q_{a,kj}-q_{j,kj}q_{a,ij})+(q_{a,ij}q_{a,ka}-q_{a,kj}q_{a,ia})\\
&\text{ }\text{
}+(q_{i,ij}q_{a,ki}-q_{i,kj}q_{a,ii})+(q_{k,ij}q_{a,kk}-q_{k,kj}q_{a,ik}),
\endaligned$$
in which no term involves $q_{s,ss}$, $1\leq s\leq d$.

Therefore we get $$\frac{T}{I_{\mathcal{P}}T} \cong
\frac{Q}{I_{\mathcal{P}}Q} \cong \mathbb{C}[q_{s, ss}]_{1\leq s\leq
d} \otimes_{\mathbb{C}} {\frac{\mathbb{C}[q_{r, st}]_{1\leq r,s,t
\leq d,\text{ }\text{ } r\neq s\text{ or }t\neq
s}}{(q_{r,st}-q_{r,ts})}}\Big{/}{I_{\mathcal{P}}Q}.$$

On the other hand, Lemma~\ref{comput} implies
$$\text{Sym}^{\bullet}\Big(\bigwedge^{d-1}W \otimes \text{Sym}^2 W\Big)\cong
\text{Sym}^{\bullet}(\mathbb{S}_{(2,1,1,\cdots,1,1)}W\oplus
\mathbb{S}_{(3,1,1,\cdots,1,0)}W).$$ We may identify the basis of
$\mathbb{S}_{(2,1,1,\cdots,1,1)}W$ with $\{q_{s,ss} | 1\leq s\leq
d\}$. So,
by\begin{picture}(5,5)\put(4,0){(}\end{picture}~\ref{equ2}), we have
$$\aligned\frac{T}{I_{\mathcal{P}}T} &\cong
\mathbb{C}[q_{s, ss}]_{1\leq s\leq d} \otimes_{\mathbb{C}}
{\frac{\mathbb{C}[q_{r, st}]_{1\leq r,s,t \leq d,\text{ }\text{ }
r\neq s\text{ or }t\neq
s}}{(q_{r,st}-q_{r,ts})}}\Big{/}{I_{\mathcal{P}}Q}\\&\cong
\text{Sym}^{\bullet}(\mathbb{S}_{(2,1,1,\cdots,1,1)}W)\otimes
\frac{\text{Sym}^{\bullet}(\mathbb{S}_{(3,1,1,\cdots,1,0)}W)}{<\mathbb{S}_{(4,3,2,\cdots,2,1)}W>}.\endaligned$$Combining
this with\begin{picture}(5,5)\put(4,0){(}\end{picture}~\ref{equ1})
gives the desired result.
\end{proof}


\begin{exmp}\label{secondcor}
It is well known (\cite{K:desing}) that if $d=3$ then $V_d$ is
isomorphic to a cone over the Pl\"{u}cker embedding of the
Grassmannian $G(2,6)$ with a three-dimensional vertex. Let $W$ be a $3$-dimensional vector space and $W'$ a $6$-dimensional
vector space. Then
$$
\frac{\text{Sym}^{\bullet}(\mathbb{S}_{(3,1,0)}W)}{<\mathbb{S}_{(4,3,1)}W>}\text{ }\cong\text{ }
\frac{\text{Sym}^{\bullet}(\bigwedge^2 (\mathbb{S}_{(2,0,0)}W))}{<\bigwedge^4 (\mathbb{S}_{(2,0,0)}W)>}
\text{ }\cong\text{ }
\frac{\text{Sym}^{\bullet}(\bigwedge^2 W')}{<\bigwedge^4 W'>}.$$ \qed
\end{exmp}



\section{Local equations of the principal component of the Hilbert scheme of points}\label{proofmain2}

In this section, we prove Proposition~\ref{mainthm2}. We start by showing
that $J_d$ has a representation-theoretic expression.

\begin{lem}\label{directsumschur}
The $\mathbb{C}$-vector space of the minimal generators of $J_d$ is the direct sum of some irreducible Schur functors.
\end{lem}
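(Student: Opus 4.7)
The plan is to invoke the natural $GL(W) = GL(d)$-action on the whole picture and then apply reductivity. Recall that $GL(W)$ acts on $\mathbb{C}[\mathbf{x}]$ by linear change of coordinates, which induces an action on $\textnormal{Hilb}^{d+1}(\mathbb{C}^d)$ preserving $V_d$ (since the subspace $\textnormal{span}\{1,x_1,\ldots,x_d\}$ is $GL(W)$-stable). Equivalently, on ideal projectors $P \in \mathcal{P}$ the action is $P \mapsto A \circ P \circ A^{-1}$. The isomorphism of Theorem~\ref{mainthm1} is $GL(W)$-equivariant by construction: the Schur module $\mathbb{S}_{(3,1,\ldots,1,0)}W$ appearing in the ambient ring is precisely the $GL(W)$-isotypic description of the linear coordinates on the slice, as already extracted in the proof of that theorem.

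The substantive geometric point is that $GL(W)$ preserves the principal component $P_d$. A general element of $P_d$ is a radical ideal of $(d+1)$ distinct points in $\mathbb{C}^d$ with non-degenerate linear span, and this class of configurations is manifestly $GL(W)$-stable. Passing to closures, $P_d$ itself is $GL(W)$-stable, and consequently so is its reduced homogeneous defining ideal $J_d \subset R_d = \textnormal{Sym}^{\bullet}(\mathbb{S}_{(3,1,\ldots,1,0)}W)$.

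The conclusion is then standard representation theory. The space of minimal homogeneous generators of $J_d$ is canonically identified, as a graded $GL(W)$-module, with the quotient $J_d / (R_d^{+} \cdot J_d)$, where $R_d^{+}$ denotes the irrelevant ideal. Each graded piece of this quotient is a finite-dimensional polynomial $GL(W)$-representation, and since $GL(W)$ is reductive in characteristic zero every such representation splits as a direct sum of irreducible polynomial representations — these are precisely the Schur modules $\mathbb{S}_{\lambda}W$ with $\ell(\lambda) \leq d$. Taking the direct sum over all degrees produces the claimed decomposition. No step presents a genuine obstacle; the only nonformal ingredient is the $GL(W)$-invariance of $P_d$, and this is immediate from the geometric description of its general element.
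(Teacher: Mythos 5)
Your proposal is correct and follows essentially the same route as the paper: both arguments rest on the $GL(W)$-stability of $P_d$ (via its general elements being point configurations), which makes $J_d$ a $GL(W)$-submodule, and then invoke complete reducibility to split into Schur modules. Your identification of the minimal generators with the graded module $J_d/(R_d^{+}\cdot J_d)$ is in fact slightly more precise than the paper's treatment, which works with the truncations $(J_d)_{\leq n}$.
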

\begin{proof}
We prove a more general statement : the vector space $(J_d)_{\leq n}:=\bigoplus_{i=0}^n (J_d)_i$ is
the direct sum of some irreducible Schur functors for every $n$.
It is enough to show that there is a group homomorphism from $GL(d)$ to $GL((J_d)_{\leq n})$ which is
comparable with the natural action of the symmetric group $S_d$.

First, there is a natural way of defining $g\cdot p_{r,st}$ for $g\in GL(d)$.
If $p_{r,st}$ is given by $I_{\{p_1,...,p_{d}\}}\in P_d$ as in (\ref{pequations}), then $g\cdot p_{r,st}$ is
given by  $I_{\{g\cdot p_1,...,g\cdot p_{d}\}}$.

Next, we define a homomorphism $\rho$ from $GL(d)$ to $GL((J_d)_{\leq n})$ as follows. For every $f(p_{r,st})_{1\leq r,s,t\leq d} \in (J_d)_{\leq n}$, define
$$
g\cdot f(p_{r,st})_{1\leq r,s,t\leq d}
$$
by
$$
g\cdot f(p_{r,st})_{1\leq r,s,t\leq d}:=f(g\cdot p_{r,st})_{1\leq r,s,t\leq d}.
$$
Since any point in $P_d$ satisfies $f=0$, we have $f(g\cdot p_{r,st})\in (J_d)_{\leq n}$. It
is easy to check that $\rho$ is a homomorphism from $GL(d)$ to $GL((J_d)_{\leq n})$. It is obvious that this
is comparable with the natural action of $S_d$.
\end{proof}

\begin{lem}
The vector space of the minimal homogeneous generators of the ideal $J_8 \subset \emph{Sym}^{\bullet}(\mathbb{S}_{(3,1,1,\cdots,1,0)}W)$ contains
$$\mathbb{S}_{(133,130,126,122,119,60,60,60)}W.$$\end{lem}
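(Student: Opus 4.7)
The plan is to combine Lemma~\ref{directsumschur} with the explicit birational parameterization of $P_8$ by $9$-tuples of points. By Lemma~\ref{directsumschur}, the graded vector space of minimal generators of $J_8$ decomposes under $GL(8)$ as a direct sum of Schur modules, so the task reduces to exhibiting a copy of $\mathbb{S}_{(133,130,126,122,119,60,60,60)}W$ inside $(J_8)_{90}$ that is not already produced by strictly lower-degree generators of $J_8$.

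To place the candidate Schur module inside $J_8$, I would use the parameterization of $P_8$ by ordered $9$-tuples of points from \cite{MH:tq}. A generic element of $P_8$ is the radical ideal $I_{\{p_1,\ldots,p_9\}}$ of $9$ distinct points in non-degenerate linear position; for such an $I$ one can solve for the coefficients $p_{m,ij}$ of $x_m$ in $P(x_i x_j)$ by Lagrange interpolation against the basis $\{1,x_1,\ldots,x_8\}$ of $\mathbb{C}[\mathbf{x}]/I$. This yields a $GL(8)$-equivariant graded ring homomorphism
$$\mu^* : \frac{\text{Sym}^{\bullet}(\mathbb{S}_{(3,1,1,\cdots,1,0)}W)}{\langle\mathbb{S}_{(4,3,2,\cdots,2,1)}W\rangle} \longrightarrow A,$$
where $A$ is a suitable $S_9$-invariant ring of rational functions on $(\mathbb{C}^8)^9$, and whose kernel is exactly the image of $J_8$ modulo the quadratic Pl\"ucker-type relations already built into Theorem~\ref{mainthm1}.

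The main computation is then a Schur-multiplicity count in degree $90$. Let $a$, $b$, $c$ denote the multiplicity of $\mathbb{S}_{(133,130,126,122,119,60,60,60)}W$ in, respectively, $\text{Sym}^{90}(\mathbb{S}_{(3,1,1,\cdots,1,0)}W)$, its image under $\mu^*$, and the degree-$90$ piece of the subideal generated by $\mathbb{S}_{(4,3,2,\cdots,2,1)}W$. Provided that no other minimal generators of $J_8$ in intermediate degrees contribute to this Schur component, the lemma reduces to $a > b + c$, the excess producing a new minimal generator of the claimed Schur type.

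The hard part will be the plethystic and Littlewood--Richardson computations at degree $90$, which are astronomically large if attempted naively. A practical strategy is to localize everything to a single Cartan weight space: restrict to vectors of weight $(133,130,126,122,119,60,60,60)$ in each of the three spaces above, then exploit the highly asymmetric shape of the partition --- especially the large gap between the fifth and sixth parts --- to rule out contributions from $\mathbb{S}_{(4,3,2,\cdots,2,1)}W$-generated products and to exhibit an explicit highest-weight vector killed by $\mu^*$. Such a weight-space calculation should be within the reach of a computer algebra system like LiE or SageMath, and the asymmetry of the target partition is precisely what one expects to make the discrepancy detectable.
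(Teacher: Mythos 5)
Your overall framing---invoking Lemma~\ref{directsumschur} to reduce the problem to exhibiting one copy of $\mathbb{S}_{(133,130,126,122,119,60,60,60)}W$ inside $(J_8)_{90}$, and testing membership in $J_8$ via the rational parametrization of $P_8$ by $9$-tuples of points---agrees with the paper. But the central step is missing, and the substitute you propose does not close the argument. First, the reduction to the inequality $a>b+c$ is not valid as stated: $c$ only accounts for the subideal generated by the quadrics $\mathbb{S}_{(4,3,2,\cdots,2,1)}W$, and the proviso that ``no other minimal generators of $J_8$ in intermediate degrees contribute'' is precisely what cannot be assumed---the full minimal generating set of $J_8$ is not known (the paper never determines it), and nothing rules out minimal generators in degrees $3$ through $89$. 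The correct way to certify minimality is to control $(R_1\cdot J_8)_{90}$ through $(J_8)_{89}$ itself: one lists the finitely many (here $15$) partitions $\lambda$ with $\mathbb{S}_{\lambda}W\subset \mathrm{Sym}^{89}(\mathbb{S}_{(3,1,\cdots,1,0)}W)$ and $\mathbb{S}_{(133,\dots)}W\subset \mathbb{S}_{\lambda}W\otimes\mathbb{S}_{(3,1,\cdots,1,0)}W$, and checks that none of their embeddings lie in $J_8$. That finite check replaces your unverifiable hypothesis.

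Second, and more seriously, you never actually produce a degree-$90$ element of $J_8$. Establishing $a>b$ by a multiplicity count would require the $GL(8)$-decomposition of $\mathrm{Sym}^{90}$ of the $280$-dimensional module $\mathbb{S}_{(3,1,\cdots,1,0)}W$ together with that of the degree-$90$ piece of the homogeneous coordinate ring of $P_8$; even the single weight space you propose to restrict to consists of all degree-$90$ monomials in $280$ weighted variables of a fixed total weight, which is astronomically large, so the LiE/Sage computation is not feasible and in any case yields no explicit polynomial to test against the parametrization. The paper's proof rests on an explicit construction absent from your proposal: the $90$ quadrics $C(a;j,(i,k))$ with $1\le a\le 3$ and $4\le i,j,k\le 8$ have the property (reflecting the non-smoothability of generic algebras with Hilbert function $(1,5,3)$) that every term is a product of one coordinate from $\{p_{r,st}\}_{1\le r\le 3,\,4\le s,t\le 8}$ with one coordinate not of that type; this yields a matrix factorization $C=\mathbf{M}\cdot v$ with $v$ a column of linear forms and $\mathbf{M}$ a $90\times 115$ matrix whose entries are $0$ or $\pm p_{r,st}$. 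The $90\times 90$ minors of $\mathbf{M}$ are the explicit degree-$90$ polynomials; one then verifies that they vanish on $P_8$, that some minor is nonzero, and that they span a copy of $\mathbb{S}_{(133,130,126,122,119,60,60,60)}W$. Without this determinantal construction, or some comparable explicit source of degree-$90$ equations, the lemma is not proved.
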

\begin{proof}[Sketch of proof]
The idea is to observe that there are relations among
$$
\{p_{r,st}\}_{1\leq r\leq 3, \text{ }4\leq s,t \leq 8}.
$$
This is suggested by the fact that a general ideal having the Hilbert function of the type $(1,5,3)$ is
not contained in the principal component $P_8$ \cite{EI:zero}, \cite{S:def}, \cite{CEVV}. In particular,
 if $
\{p_{r,st}\}_{1\leq r\leq 3, \text{ }4\leq s,t \leq 8}
$ are general complex numbers and if the other coordinates are $0$, then the colength 9 ideal determined by those
coordinates does not belong to $P_8$.

By the algorithm in \cite[pp.124--128]{R:comb}, one can check that $\mathbb{S}_{(133,130,126,122,119,60,60,60)}W$ appears in the decomposition of $\text{Sym}^{90}(\mathbb{S}_{(3,1,...,1,0)}W)$.
We find elements in $\mathbb{S}_{(133,130,126,122,119,60,60,60)}W$ in a very explicit way.

Recall from (\ref{aijkaneqijk}) that if $a\neq j,i,k$ then $$C(a;j,(i,k))=\sum_{m=1}^d (p_{m,ij}p_{a,km}-p_{m,kj}p_{a,im})$$
in the polynomial ring
$\frac{\mathbb{C}[p_{r, st}]_{1\leq r,s,t \leq d}}{(p_{r,st}-p_{r,ts})}$. The key fact is that
any term in any $C(a;j,(i,k))$ with
$1\leq a \leq 3, 4\leq i,j,k\leq d$ is a product of two coordinates, one of which is in
$\{p_{r,st}\}_{1\leq r \leq 3, 4\leq s,t,\leq d}$ and the other is not. We consider the following $90\times 1$ matrix each of whose entry is a polynomial of degree 2.
$$
\left(
                                   \begin{array}{c}
                                      C(1;4,(5,6)) \\
                                      \vdots \\
                                       C(a; j, (i,k)) \\
                                        \vdots \\
                                      C(3;6,(5,6) ) \\
                                    \end{array}
                                  \right),
$$
where $1\leq a \leq 3$, and $4\leq j<i<k\leq8$ or $4\leq k< j<i \leq 8$ or $4=i=j<k\leq 8$
or $4=i<j=k\leq 8$ or $5=i=j<k\leq 6$ or $5=i<j=k \leq 6$.

Then we can observe that there is a $90\times 115$ matrix $ \mathbf{M}$ such that each entry of $ \mathbf{M}$ is one of the elements in
$$
\{0, \pm p_{r,st} \}_{1\leq r\leq 3, \text{ }4\leq s,t \leq 8},
$$
 and  $\mathbf{M}$ fits into the following matrix factorization:
$$
\left(
                                   \begin{array}{c}
                                      C(1;4,(5,6)) \\
                                      \vdots \\
                                       C(a; j, (i,k)) \\
                                        \vdots \\
                                      C(3;6,(5,6) ) \\
                                    \end{array}
                                  \right)=
                                  \mathbf{M}\cdot
            \left(
                                   \begin{array}{c}
                                                                            \vdots \\
                                       p_{r',s't'}- \delta_{r',t'} \frac{p_{s',s's'}}{2} - \delta_{r',s'} \frac{p_{t',t't'}}{2} \\
                                        \vdots \\
                                                                          \end{array}
                                  \right),
$$
where $\delta$ denotes the Kronecker delta, and  $1\leq r',s'\leq 3<t'\leq 8$ or $4\leq r'\leq 8$, $4\leq s'\leq t'\leq 8 \text{ but }r',s',t' \text{ are not all equal}$.

Then exhaustive computations show that  the determinant of any $90\times 90$ minor
of $\mathbf{M}$  lies in $\mathbb{S}_{(133,130,126,122,119,60,60,60)}W$ and in $J_8$, and
that the determinant of some $90\times 90$ minor is nonzero.\footnote{
One can check whether or not a given polynomial belongs to $J_8$, since $P_8$ admits
an explicit rational parametrization. (For example, see \cite[Theorem 3.3]{S:limit} or
\cite[Proposition 2.6]{MH:tq}.)} Then, thanks to Lemma~\ref{directsumschur}, $J_8$
contains $\mathbb{S}_{(133,130,126,122,119,60,60,60)}W.$

It remains to show that  $\mathbb{S}_{(133,130,126,122,119,60,60,60)}W$ is contained
in the minimal generators of $J_8$.  If $\mathbb{S}_{(133,...)}W$ were not minimal, there would be
a partition $\lambda$ of $89\cdot 9$ such that
$$
\mathbb{S}_{\lambda} W \subset J_8\cap \text{Sym}^{89}(\mathbb{S}_{(3,1,...1,0)}W),
$$and $\mathbb{S}_{\lambda} W$ generates $\mathbb{S}_{(133,...)}W$. But we have to check
that there is no such $\lambda$.

To this end, we consider all the partitions $\lambda$ such that
$$
\mathbb{S}_{\lambda} W \subset (\mathbb{S}_{(3,1,...1,0)}W)^{\otimes 89}
$$
and
$$\mathbb{S}_{\lambda} W \otimes \mathbb{S}_{(3,1,...1,0)}W \supset \mathbb{S}_{(133,130,126,122,119,60,60,60)}W.
$$
There are 15 such partitions, and I checked that none of their embeddings into
$\text{Sym}^{89}(\mathbb{S}_{(3,1,...1,0)}W)$ are in $J_8$. We remark that each $89\times 89$ minor
of $\mathbf{M}$ belongs
to one of such $\mathbb{S}_{\lambda} W$.
\end{proof}

\begin{rem}
We note that the generator $\mathbb{S}_{(4,3,2,\cdots,2,1)}W$ of $I_8$ does not generate $\mathbb{S}_{(133,...)}W$, in other words,
$$
\mathbb{S}_{(133,130,126,122,119,60,60,60)}W \not\subset <\mathbb{S}_{(4,3,2,\cdots,2,1)}W>.
$$ It is an elementary consequence of the combinatorial Littlewood-Richardson rule(for example,
see \cite[p456]{FH:repre}). In fact any  $\mathbb{S}_\lambda W$ ($\lambda=(\lambda_1, \lambda_2, \cdots, \lambda_8)$) appearing in the decomposition of  $\mathbb{S}_{(4,3,2,\cdots,2,1)}W \otimes (\mathbb{S}_{(3,1,1,\cdots,1,0)}W)^{\otimes (r-2)}$ satisfies $\lambda_{8-k} +\cdots + \lambda_8\geq rk+1$, for any $r\geq 2$ and any $k=0,...,7$.

Concretely speaking, the ideal generated by $C(a; j, (i,k))$ does not contain
any nonzero determinants of $90\times 90$ minors of $\mathbf{M}$. It is easy to
prove this without using Schur functors, because for any term $\prod_i p_{r_i,s_it_i}$ in any
determinant of $90\times 90$ minors of $\mathbf{M}$, we have $r_i\neq s_j, t_j$ for all $i,j$.
\end{rem}

\section{Proof of Lemma~\ref{rationalacm}}
Lemma~\ref{rationalacm} can be considered as a standard fact. We do not claim any novelty for its proof.
\begin{proof}[Proof of Lemma~\ref{rationalacm}]
Recall that the regularity index of $S$, $r(S)$, is the minimum
degree in which the Hilbert function of $S$ agrees with the Hilbert
polynomial (see \cite{BH:CM} for more details). If $S \subset
\mathbb{P}^N$ is an aCM scheme of dimension $n$, then $r(S) =
\text{reg}(S)-n-1$ (this follows from \cite[Theorem 4.4.3
(b)]{BH:CM}). Hence it is enough to show that $r(S)\leq 0$.

The Hilbert function of $S$ is $H(S,t) = h^0(S,\mathcal{O}_S(t))$
and its Hilbert polynomial is $\chi(S,t) = h^0 (S,\mathcal{O}_S(t))
+ (-1)^n h^n(S,\mathcal{O}_S(t)) = H(S,t) + (-1)^n h^0 (S, \omega_S(-t))$
where $\omega_S$ denotes the dualizing sheaf.

The second condition on $\tilde{U}$ implies $H^0(\tilde{U}, \omega_{\tilde{U}})=0$
(see \cite[Chapter 4]{Kol:rational}). Since $S$ is Cohen-Macaulay
and $\text{codim } S\setminus \tilde{U}\geq 2$, we have $H^0(S, \omega_S)=0$. So we have $h^0(S, \omega_S(-t)) = 0$
for all $t\geq 0$ and this establishes the lemma.
\end{proof}

\section{Proof of Proposition~\ref{rationx}}\label{proofrationalx}
In this section we prove Proposition~\ref{rationx}. We first construct an open
subset $U_d$ of $\text{Spec} (R_d/J_d)$, where $\tilde{U}_d$ will be the projective
counterpart of $U_d$ in $\text{Proj} (R_d/J_d)$.

Let $U_d$ be the open subset of $P_d$ consisting of all ideals $I\in P_d$
such that the radical $\text{Rad}(I)$ of $I$ defines at least $d$
distinct points. Then $U_d$ is smooth and $\text{codim}_{P_d} (P_d\setminus
U_d)=2$.\footnote{For $2\leq d \leq 8$, it can be checked by the computer algebra system Macaulay 2.} We consider the Hilbert--Chow morphism on $U_d$,
$$
\rho : U_d \longrightarrow  \text{Sym}^{d+1}(\mathbb{C}^d),
$$
and the  averaging map
$$
\pi :  \text{Sym}^{d+1}(\mathbb{C}^d) \longrightarrow  \mathbb{C}^d
$$
given by
$$
\pi\big(\{(x_{1,1},...,x_{1,d}), ..., (x_{d+1,1},...,x_{d+1,d}) \}\big) =
\Big(\frac{x_{1,1}+...+x_{d+1,1}}{d+1}, ... ,
\frac{x_{1,d}+...+x_{d+1,d}}{d+1}\Big).
$$

Let $j$ be the natural morphism
$$
j : U_d \hookrightarrow P_d \cong \mathbb{C}^d \times \text{Spec} (R_d/J_d),
$$
and let $pr_1$ be the projection
$$
pr_1 : \mathbb{C}^d \times \text{Spec} (R_d/J_d)  \longrightarrow
\mathbb{C}^d.
$$

\begin{lem}
$pr_1|_{j(U_d)}$ agrees with $\pi \circ \rho \circ
j^{-1}|_{j(U_d)}$, in other words, the following diagram
\begin{picture}(1,1)
\put(-168,-22){\tiny{$>$}}\put(-145,-66){\tiny{$>$}}
\put(-260,-20){\line(1,0){96}}\put(-260,-64){\line(1,0){120}}
\put(-215,-16){\scriptsize{$\rho$}}\put(-202,-60){\scriptsize{$pr$}}\put(-194,-62){\tiny{$1$}}
\end{picture}
$$\begin{CD}
 \text{ }\text{ }\text{ }\text{ }\text{ }\text{ }\text{ }
 \text{ }\text{ }\text{ }\text{ }\text{ }\text{ }\text{ }
 \text{ }\text{ }\text{ }\text{ }\text{ }\text{ }\text{ }\text{ }\text{ }
 \text{ }\text{ }\text{ }\text{ }  U_d  \text{ }\text{ }\text{ }\text{ }\text{ }\text{ }\text{ }
 \text{ }\text{ }\text{ }\text{ }\text{ }\text{ }\text{ }
 \text{ }\text{ }\text{ }\text{ }\text{ }\text{ }\text{ }\text{ }\text{ }
 \text{ }\text{ }\text{ }\text{ }     @.       \emph{Sym}^{d+1}(\mathbb{C}^d)\\
      @A{j^{-1}}AA                      @VV{\pi}V\\
\mathbb{C}^d \times \emph{Spec} (R_d/J_d)\supset j(U_d)  \text{ }\text{ }\text{ }\text{ }\text{ }\text{ }\text{ }
 \text{ }\text{ }\text{ }\text{ }\text{ }\text{ }\text{ }
 \text{ }\text{ }\text{ }\text{ }\text{ }\text{ }\text{ }\text{ }\text{ }
 \text{ }\text{ }\text{ }\text{ } @. \mathbb{C}^d @.\text{ }\text{ }\text{ }\text{ }\text{ }\text{ }\text{ }\text{ }\text{ }
 \text{ }\text{ }\text{ }
\end{CD}$$
is commutative $($up to automorphisms of $\mathbb{C}^d \times \emph{Spec} (R_d/J_d))$.
\end{lem}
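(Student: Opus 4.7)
The plan is to identify both maps with the ``average coordinate'' function on the $d+1$ points of the cycle $\rho(I)$ (counted with multiplicity), and then absorb the resulting discrepancy into a shearing automorphism of the product structure from Theorem~\ref{mainthm}.

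First I would establish a trace formula on $V_d$. For any $I \in V_d$, the set $\{1,x_1,\dots,x_d\}$ is a basis of $\mathbb{C}[\mathbf{x}]/I$; in this basis, multiplication by $x_r$ sends $1 \mapsto x_r$ and $x_i \mapsto P(x_ix_r) = p_{0,ir}+\sum_m p_{m,ir}x_m$. Reading off the diagonal entries of the resulting matrix yields
\[
\mathrm{tr}\bigl(\mathrm{mult}_{x_r}\colon \mathbb{C}[\mathbf{x}]/I \to \mathbb{C}[\mathbf{x}]/I\bigr) \;=\; \sum_{i=1}^d p_{i,ir}(I).
\]
For $I \in U_d$ with $\rho(I) = \sum_k m_k[\mathbf{a}_k]$, the same trace equals $\sum_k m_k a_{k,r}$, so by definition of $\pi$,
\[
(\pi \circ \rho)(I)_r \;=\; \tfrac{1}{d+1}\,\mathrm{tr}(\mathrm{mult}_{x_r}) \;=\; \tfrac{1}{d+1}\sum_{i=1}^d p_{i,ir}(I).
\]

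Next I would translate this into the $q$-coordinates of Theorem~\ref{mainthm}. The substitutions there give $p_{r,rr} = 2q_{r,rr}$, and, applying the $r=t$ rule together with the symmetry $p_{r,st}=p_{r,ts}$, also $p_{i,ir} = p_{i,ri} = q_{i,ri} + q_{r,rr}$ for $i \ne r$. Summing over $i$ gives
\[
\sum_{i=1}^d p_{i,ir} \;=\; (d+1)\,q_{r,rr} \;+\; \sum_{i \ne r} q_{i,ri}.
\]
Since $pr_1$ extracts the $\mathbb{C}^d$ factor $(q_{r,rr})_r$, the two maps in the diagram differ on $j(U_d)$ in the $r$-th coordinate by the regular function $\tfrac{1}{d+1}\sum_{i\ne r}q_{i,ri}$, which is pulled back from $\mathrm{Spec}(R_d/J_d)$.

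Finally I would absorb this discrepancy into an automorphism of the product. The triangular substitution $\phi^\ast q_{r,rr} = q_{r,rr} + \tfrac{1}{d+1}\sum_{i\ne r}q_{i,ri}$, fixing all other $q$-coordinates, defines an automorphism $\phi$ of $\mathbb{C}^d \times \mathrm{Spec}(R_d/J_d)$ with $pr_1 \circ \phi = \pi \circ \rho \circ j^{-1}$ on $j(U_d)$. The main obstacle is purely notational: the proof of Theorem~\ref{mainthm} passes through three sets of coordinates ($p$, $p'$, $q$), and one must apply the change-of-variables rules carefully to obtain the identity $p_{i,ir} = q_{i,ri}+q_{r,rr}$ above; once this dictionary is in place, the trace formula immediately yields the lemma.
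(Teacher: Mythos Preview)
Your proof is correct and follows essentially the same approach as the paper: the paper simply declares that from the proof of Theorem~\ref{mainthm} one ``may define'' $pr_1\circ j$ by the averaged-trace formula $I\mapsto\bigl(\tfrac{1}{d+1}\sum_r p_{r,sr}\bigr)_s$ and then says it is elementary that this equals $\pi\circ\rho$, whereas you spell out both the trace computation and the explicit shearing automorphism absorbing the discrepancy $\tfrac{1}{d+1}\sum_{i\ne r}q_{i,ri}$. The content is the same; you have simply made explicit what the paper leaves implicit in the phrases ``we may define'' and ``it is elementary to check.''
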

\begin{proof}
For each element $I$ in $U_d$, there is a corresponding ideal projector $P_I$, which gives rise to
$p_{r,sr}$, $1\leq r,s \leq d$ as in (\ref{pequations}). From the proof of Theorem~\ref{mainthm},
we may define $$pr_1 \circ j : U_d \longrightarrow \mathbb{C}^d$$ by
$$\aligned I\text{ } \mapsto\text{ } &\Big(\frac{\sum_{r=1}^d p_{r,1r}}{d+1}, \cdots, \frac{\sum_{r=1}^d p_{r,dr}}{d+1}\Big).
\endaligned$$ It is elementary to check that this map is the same as $\pi \circ \rho$.
\end{proof}

We identify the fiber $(pr_1)^{-1}(O)$ over the origin
$O=(0,...,0)\in \mathbb{C}^d$, with the affine cone over
$\text{Proj} (R_d/J_d)$. By construction, $j(U_d) \cap  (pr_1)^{-1}(O)$ parameterizes the
ideals defining $d$ distinct points and one more (possibly
infinitely near) point, whose average (=center of mass) is the
origin $O$. Hence scaling distances from $O$ by any nonzero constant
preserves membership in $j(U_d) \cap (pr_1)^{-1}(O)$. In other words,
if an ideal defining $(d+1)$ points, say $p_1,...,p_{d+1}$, belongs
to $j(U_d) \cap (pr_1)^{-1}(O)$, then so does the ideal defining
$\lambda\cdot p_1,...,\lambda\cdot p_{d+1}$ for any $\lambda\neq 0
\in\mathbb{C}$. In fact we have $$\text{Proj} (R_d/J_d)=(\text{Spec} (R_d/J_d) \setminus O
)/\sim,$$ where the equivalence relation is given by
$I_{\{p_1,...,p_{d+1}\}}\sim I_{\{\lambda\cdot p_1,...,\lambda\cdot
p_{d+1}\}}$, $\lambda\neq 0$.

Therefore all told, $j(U_d) \cap  (pr_1)^{-1}(O)$ is the affine cone
over a certain open subset of $\text{Proj} (R_d/J_d)$,
with the vertex of the cone removed. We denote the open subset by $\tilde{U}_d$. So we get
$$\tilde{U}_d\text{ }=\text{ }\left(j(U_d) \cap
(pr_1)^{-1}(O)\right)/\sim.$$ Of course, for any point $q\in \mathbb{C}^d$, we have
$$\tilde{U}_d\text{ }\cong\text{ }\left(j(U_d) \cap
(pr_1)^{-1}(q)\right)/\sim.$$

\begin{lem}
For any $d\geq 2$, $\tilde{U}_d$ is covered by rational proper curves, i.e., for any point
$x\in \tilde{U}_d$, there is a smooth irreducible rational
proper curve on $\tilde{U}_d$ passing through $x$.
\end{lem}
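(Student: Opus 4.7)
The plan is to produce, for every $x \in \tilde{U}_d$, a non-constant morphism $\varphi : \mathbb{P}^1 \to \tilde{U}_d$ whose image passes through $x$. I will construct $\varphi$ as the closure of the orbit of $x$ under a carefully chosen one-parameter subgroup $\sigma : \mathbb{G}_m \to GL(\mathbb{C}^d)$. Because $\sigma$ fixes the origin $O$, its induced action on $P_d$ preserves the fiber $pr_1^{-1}(O)$, and because it commutes with the scalar $\mathbb{G}_m$-action that defines the equivalence $\sim$, it descends to a $\mathbb{G}_m$-action on $\tilde{U}_d$.

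Represent $x$ by an ideal $I_x \in j(U_d) \cap pr_1^{-1}(O)$ whose scheme-theoretic support is $\{r_1, \ldots, r_k\} \subset \mathbb{C}^d$ with $k \in \{d, d+1\}$, and when $k = d$ let $w \in \mathbb{C}^d \setminus\{0\}$ denote the tangent direction at the unique infinitely near point. I will pick a direct sum decomposition $\mathbb{C}^d = L \oplus H$ with $\dim L = 1$, $\dim H = d-1$, subject to the following generic conditions: $L$ is distinct from each line $\mathbb{C}(r_i - r_j)$ (and from $\mathbb{C} w$ if $k = d$), and $H$ contains none of the vectors $r_i - r_j$ (nor $w$ if $k = d$). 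Each restriction is the complement of a proper closed subset in the space of such decompositions, and there are finitely many, so a generic $(L, H)$ satisfies all of them simultaneously. Let $\sigma(\lambda) = \lambda\cdot\mathrm{id}_L \oplus \mathrm{id}_H$ act linearly on $\mathbb{C}^d$.

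The orbit map $\mathbb{G}_m \to \tilde{U}_d$, $\lambda \mapsto \sigma(\lambda)\cdot x$, extends, by projectivity of $\mathrm{Proj}(R_d/J_d)$, to a morphism $\varphi : \mathbb{P}^1 \to \mathrm{Proj}(R_d/J_d)$. For $\lambda \in \mathbb{G}_m$ the configuration $\sigma(\lambda)\cdot C_x$ has the same combinatorial type as $C_x$, so it lies in $U_d$. The flat limit at $\lambda = 0$ is the configuration $\{\mathrm{proj}_H(r_i)\}$ with limiting tangent $\mathrm{proj}_H(w)$ (when applicable); the assumption on $L$ guarantees these are $k$ distinct points in $H$ with a nondegenerate tangent, hence in $U_d$. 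After rewriting modulo $\sim$ (using the scalar equivalence to rescale by $\lambda^{-1}$), the limit at $\lambda = \infty$ becomes $\{\mathrm{proj}_L(r_i)\}$ with tangent $\mathrm{proj}_L(w)$, and the assumption on $H$ similarly places it in $U_d$. Consequently $\varphi$ factors through $\tilde{U}_d$, and the generic choice of $(L, H)$ ensures both that $x$ is not $\sigma$-fixed (the two limits are themselves distinct, lying in $H$ and $L$ respectively) and that the orbit map is birational onto its image, so the image of $\varphi$ is a smooth irreducible rational proper curve in $\tilde{U}_d$ passing through $x$.

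The main technical step I expect to require care is identifying the flat limits of the family $\sigma(\lambda)\cdot I_x$ in $\mathrm{Hilb}^{d+1}(\mathbb{C}^d)$ at $\lambda = 0, \infty$ with the naive projected configurations described above (including the claimed tangent directions at any surviving double point), and confirming that these limits remain in the principal component $P_d$; this reduces to a local analysis of the family at the two special fibers using properness of the Hilbert--Chow morphism on $P_d$ together with the fact that for $\lambda \in \mathbb{G}_m$ the fiber already lies in $P_d$, so its specialization does as well.
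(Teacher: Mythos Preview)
There is a genuine gap. Your identification of the two limit points of the orbit is wrong, and once corrected the curve does \emph{not} stay inside $\tilde{U}_d$.

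Recall that $\tilde{U}_d$ sits inside $\text{Proj}(R_d/J_d)$, which in turn is built from the chart $V_d$ where $\{1,x_1,\dots,x_d\}$ is a basis of $\mathbb{C}[\mathbf{x}]/I$. A configuration of $d+1$ points lying in a proper linear subspace of $\mathbb{C}^d$ fails this condition, so it does not define a point of $V_d$ at all, let alone of $U_d$. Thus the projected configurations $\{\mathrm{proj}_H(r_i)\}$ (all contained in the hyperplane $H$) and $\{\mathrm{proj}_L(r_i)\}$ (all contained in the line $L$) are not the limits in $\text{Proj}(R_d/J_d)$; they are not even points of that variety. What actually happens is easy to see in coordinates: writing $L=\mathbb{C}e_1$, one has $p'_{m,st}=\frac{\lambda_s\lambda_t}{\lambda_m}\,p_{m,st}$ under $\sigma(\lambda)$, so as $\lambda\to 0$ the coordinates $p'_{1,st}$ with $s,t\ge 2$ blow up like $\lambda^{-1}$ whenever $p_{1,st}\ne 0$, and after projectivizing only these survive. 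The resulting ideal contains $x_1\mathfrak{m}$ and hence $\mathfrak{m}^3$, so its radical is $\mathfrak{m}$: the limit is a fat point at the origin, which lies in $\text{Proj}(R_d/J_d)\setminus\tilde{U}_d$. The same happens at $\lambda=\infty$. For a generic $x$ the quantities $p_{1,st}$ are nonzero, so your curve exits $\tilde{U}_d$ at both ends.

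The paper's argument is organized differently precisely to avoid this. It first writes down a single explicit configuration $x_1,\dots,x_{d+1}$ with the special feature that $p_{j,st}=0$ whenever $j\notin\{s,t\}$ (this follows from the symmetry of the chosen points). For that configuration the diagonal torus action has no blowing-up coordinate, and the limit as one $\lambda_j\to 0$ is a genuine point of $\tilde{U}_d$ in which exactly two of the $d+1$ points have collided while the rest remain in general position. This produces one smooth rational curve inside $\tilde{U}_d$; the transitivity of the $GL(d)$-action on $\tilde{U}_d$ then carries it through any prescribed point. If you want to salvage your approach, you would need either to choose $(L,H)$ depending on $x$ so that the relevant $p_{1,st}$ vanish (which is not a generic condition and typically cannot be met for a given $x$), or to abandon one-parameter subgroups of $GL(d)$ and instead, as the paper does, build the curve for a special $x$ and then transport it.
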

\begin{proof}
The idea of the proof is to find a smooth irreducible rational
proper curve on $\tilde{U}_d$, and to apply the $GL(d)$-action to the curve.

First we find a $\mathbb{P}^1$ on $\tilde{U}_d$ as follows. Consider the following $(d+1)$
points on $\mathbb{C}^d$ : $x_i=(x_{i,1},...,x_{i,d})$, $1\leq i\leq d+1$, where

\smallskip

\begin{picture}(200,30)
\put(100,12){$x_{i,j}\text{ }=$}
\put(135,12){\Big{\{}}\put(145,23){$-d,\text{
}\text{ }\text{ }\text{ }\text{ }\text{ }\text{
}\text{ }\text{ }\text{ }\text{ }\text{ }\text{ }\text{ }\text{ }\text{ if }
i=j$}\put(145,2){$1,\text{ }\text{ }\text{ }\text{ }\text{
}\text{ }\text{ }\text{ }\text{ }\text{ }\text{ }\text{ }\text{
}\text{ }\text{ }\text{ }\text{ }\text{ }\text{if } i\neq j.$}
\end{picture}

\noindent Note that the image of the ideal defining $x_1,...,x_{d+1}$ under $(pr_1 \circ j)$ is the origin $O$.

We fix $d$ distinct points $[a_j : b_j]$ $(1\leq j\leq d)$ on $\mathbb{P}^1\setminus [1:1]$, and
define a morphism
$$
\varphi : \mathbb{P}^1 \longrightarrow \tilde{U}_d
$$
by
$$
[\alpha : \beta]\text{ }\mapsto\text{ }\Big[\small{\text{the ideal vanishing
along }}\Big(\small{\frac{b_1\alpha -a_1\beta}{b_1-a_1}}x_{i,1},...,
\small{\frac{b_d\alpha -a_d\beta}{b_d-a_d}}x_{i,d}\Big), 1\leq i\leq d+1\Big],
$$ where $[$ ideal $]$ denotes the equivalence class of the ideal. For each $j$, we define $\varphi([a_j:b_j])$ by
the equivalence class of the non-radical ideal as a limit of radical ideals,
where two points collide. In other words,
$$
\varphi([a_j : b_j])\text{ }=\text{ }\text{ }\text{ }\text{ }\text{ }\text{ }\text{lim}\text{ }\text{ }\text{ }\text{ }\text{ }\text{ }\text{ }\text{ }\varphi([\alpha : \beta]).
$$\begin{picture}(1,1) \put(215,10){\tiny{$[\alpha : \beta]\rightarrow [a_j : b_j]$}}
\put(1,-4){It is straightforward to check that $\varphi$ is well-defined and
that $\varphi(\mathbb{P}^1)$ is smooth and irreducible.}\end{picture}

Now we can prove the lemma. For any element $[I]\in \tilde{U}_d$, there is $g\in GL(d)$ such that
$$
[I]\in g\cdot (\varphi(\mathbb{P}^1)),
$$
where $\cdot$ is the natural action of $GL(d)$.
\end{proof}


\end{document}